\documentclass{article}
\usepackage{cite}
\usepackage{amsmath,amssymb,amsthm}
\usepackage{titlesec,hyperref}
\usepackage{color}
\usepackage{fancyhdr}
\usepackage[margin=3cm]{geometry}

\newtheorem{theo}{Theorem}[section]
\newtheorem{lemm}[theo]{Lemma}
\newtheorem{defi}[theo]{Definition}

\newtheorem{prop}[theo]{Proposition}
\newtheorem{rema}[theo]{Remark}
\numberwithin{equation}{section}
\linespread{1.2}

\allowdisplaybreaks
\begin{document}
	\title{Global Gevery regulartiy  and analyticity of a weakly dissipative Camassa-Holm equation}
	\author{
		Zhiying $\mbox{Meng}^1$ \footnote{email: mengzhy3@mail2.sysu.edu.cn} \quad and\quad
		Zhaoyang $\mbox{Yin}^{1,2}$ \footnote{email: mcsyzy@mail.sysu.edu.cn}\\
		$^1\mbox{Department}$ of Mathematics, Sun Yat-sen University,\\
		Guangzhou, 510275, China\\
		$^2\mbox{Faculty}$ of Information Technology,\\
		Macau University of Science and Technology, Macau, China
	}
	\date{}
	\maketitle
	\begin{abstract}
		This work is concerned with the  Gevrey regularity and analyticity of the solution to a weakly dissipative Camassa-Holm system. We first demonstrate the local Gevery regularity and analyticity of this  equation. Then, we disscuss the  continuity of the data-to-solution map. Finally, we obtain the global Gevery regularity of this system in Gevery class $G_{\sigma}$ with $\sigma\geq 1$ in time.
		\end{abstract}
		\noindent \textit{Keywords}:  A weakly dissipative Camassa-Holm  equation; analyticity;  Gevery class; global
		Gevrey regularity\\
\noindent \textit{Mathematics Subject Classification}:35Q53, 35B30, 35C07, 35G25
	\tableofcontents
	\section{Introduction}
	\par\
	
	In this paper, we study the following weakly dissipative Camassa-Holm equation \cite{Igor2020jde}
	\begin{equation}\label{0.1}
		\left\{\begin{array}{l}
			u_t-u_{txx}+3uu_x+\lambda(u-u_{xx})=2uu_x+uu_{xxx}+\alpha u+\beta u^2u_x+\gamma u^3u_x+\Gamma u_{xxx},~t>0, \\
			u(0,x)=u_0,~x\in\mathbb{R},
		\end{array}\right.
	\end{equation}
where $\alpha, \beta,\gamma,\Gamma$ are real number and $\lambda$ is a positive number. Let $\Lambda ^{-2}=(1-\partial_{xx})^{-1},~h(u)=(\alpha+\Gamma)u+\frac{\beta}{3}u^3+\frac{\gamma}{4}u^4.$  The above equation can be reformulted in the following form
	\begin{equation}\label{u}
	\left\{\begin{array}{l}
		u_t+(u+\Gamma)u_x+\lambda u=Q,\ t>0, ~~x\in\mathbb{R},\\
		u(0,x)=u_0,
	\end{array}\right.
\end{equation}
where $Q=-\Lambda ^{-2}\partial_x\Big(-h(u)+u^2+\frac{1}{2}u_x^2\Big).$
 For  $\lambda=\alpha=\beta=\gamma=\Gamma=0,$ the equation \eqref{u} reads as the Camassa-Holm (CH) equation
 \begin{align}\label{ch}
 	(1-\partial_x^2)u_t=3uu_x-2u_xu_{xx}-uu_{xxx},
 \end{align}
which has a bi-Hamiltonian structure  and is completely integrable
 \cite{Camassa1993,Constantin01scat}. The local well-posedness of CH equation for initial data $u_0\in H^s$ with $s>\frac 3 2$ has been proved in \cite{Constantin1998,R2001,Constantin1998whs}.  In addition, many researchers established the local well-posedness of CH equation in Besov spaces $B^s_{p,r}$ with $({\rm i})~s>1+\frac{1}{p};~({\rm ii})~ s={1+\frac{1}{p}},~p\in[1,\infty),r=1$ \cite{LiA2000jde,Danchin2003wp,Danchin2001inte,Li2016nwpC,YE2021A}. The CH equation has not only global strong solutions \cite{Constantingeometric,Constantin1998,Constantin1998whs} but also the solutions will blow up in finite time \cite{Bressan2006g-c,Constantin1998whs,Constantin1998}. The authors established the existence of the unique global solutions \cite{Bressan2015character}. In \cite{Bressan2007gd}, Bressan  and Constantin  proved the global  dissipative solutions  of the CH equation. Its local  ill-posedness problem was presented in \cite{Guoyy2021Ill,Guo2019ill,Li2021}. The analyticity for the solutions of CH equation were studied in \cite{Barostichi16,Himonasana03}.  In \cite{LWgevery}, the authors discussed the Gevery regularity for the Camassa-Holm  type systems by a generalized  Ovsyannikov theorem.

Recently, the author established the local well-posedness of system \eqref{u}  in Sobolev spaces $H^s$ with $s>\frac 3 2$ \cite{Igor2020jde}. In \cite{MengYin}, the  authors proved that this equation is locally well-posed in Besov spaces  and the existence of global strong solutions under the condition that small initial  datum. And the existence and uniqueness of weak solutions were presented in \cite{MY}. The analyticity and Gevery regularity of the system \eqref{u} has not been investigated. Hence, in this paper, using the generalized  Ovsyannikov theorem \cite{LWgevery}, we get the  local analyticity and Gevrey regularity of the solutions to system \eqref{u}. And we see that  the continuity of the data-to-solution map.  On the other hand, following the idea \cite{Levermoreeular}, we employ the global analyticity and Gevrey regularity of this system.

Our  paper is organized as follows. In Section 2, we give some preliminary results. In Sections 3, 4, we discuss  the   the local analyticity and Gevrey regularity, and the continuity of the data-to-solution map  of the equation \eqref{u}. In Section 5, we study the   global analyticity and Gevrey regularity.
	\section{Preliminaries}
	\par\
	
	In this section, we give some theorems and some lemmas that will be of use to prove our main results. Now,
		we study the  Cauchy problem for the above system which can be rewritten  in the following abstract form
			\begin{equation}\label{cauchy}
			\left\{\begin{aligned}
				&\frac{du}{dt}=F(t,u(t)),\\
				&u(0,x)=u_0.
			\end{aligned}\right.
		\end{equation}
	
	\begin{theo}\cite{O1965Singular,Baouendi77G,NirenbergG72}\label{O}
	Let $\{X_{\delta}\}_{0<\delta<1}$  be a scale of decreasing Banach spaces, namely, for any $0<\delta'<\delta$, we have $X_\delta\subset X_{\delta'}$ and $\|\cdot\|_{\delta'}\leq\|\cdot\|_{\delta}.$ Given $T,~{R}>0.$ Let $u_0\in X_1,$ assume that\\
	(1) For any $0<\delta'<\delta<1,$ the function $t\mapsto u(t)$ is holomorphic in $|t|< T$ and continuous on $|t|< T$ with values in $X_{\delta}$ and
	$$\sup_{|t|< T}\|u(t)\|_{\delta}<R.$$
	Then $t\mapsto F(t,u(t))$ is a holomorphic function on $|t|<T$ with values in $X_{\delta'}.$\\
	(2)For any $0<\delta'<\delta<1,$ and any $u,~v\in \overline{B(u_0,R)}\subset X_{\delta},$ there exists a positive constant $L$ depending
on $u_0$ and $R$ such that
	$$\sup_{|t|< T}\|F(t,u(t))-F(t,v(t))\|_{\delta'}\leq\frac{L}{\delta'-\delta}\|u-v\|_{\delta}.$$\\
	(3)For  any $0<\delta<1,$ there exists a positive constant $M$ depending on $u_0$ such that
	$$\sup_{|t|< T}\|F(t,u_0(t))\|_{\delta'}\leq\frac{M}{1-\delta}.$$
	Then there exist a
	$T_0\in(0,T)$ and a unique solution to the Cauchy problem \eqref{cauchy}, which for any 	$\delta\in (0,1)$ is  holomorphic in $|t|<T_0(1-\delta)$ with values in $X_{\delta}.$
\end{theo}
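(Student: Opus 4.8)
The plan is to recast the Cauchy problem \eqref{cauchy} as the fixed-point equation $u(t)=u_0+\int_0^t F(s,u(s))\,ds$ and to solve it by Picard iteration carried out simultaneously in the whole scale $\{X_\delta\}$, the real point being to balance the one-derivative loss encoded by the factor $(\delta-\delta')^{-1}$ in hypothesis (2) against the shrinking of the time disk as $\delta\uparrow 1$. After the (essentially harmless) reduction to $u_0=0$ — replace $F(t,u)$ by $\widetilde F(t,u)=F(t,u+u_0)$; hypothesis (3) becomes exactly the original (3), and (1)--(2) are preserved with the ball recentred — set $u^{(0)}\equiv 0$ and $u^{(n+1)}(t)=\int_0^t F(s,u^{(n)}(s))\,ds$. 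Fix a small parameter $a\in(0,T]$, to be chosen at the end in terms of $L,M,R$, and for $0<\delta<1$ work on the disk $|t|<a(1-\delta)$; write $\rho(\delta,s)=a(1-\delta)-|s|$ for the remaining ``budget.''

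First I would show, by induction on $n$, that each $u^{(n)}$ is holomorphic on $|t|<a(1-\delta)$ with values in $X_\delta$ (this uses hypothesis (1) together with the fact that the integral of a holomorphic $X_{\delta'}$-valued map is holomorphic, after re-indexing the shrinking domains), that $\|u^{(n)}(t)-u_0\|_\delta<R$ there, and — the quantitative heart — that $\|u^{(n+1)}(t)-u^{(n)}(t)\|_\delta\le \Phi_{n+1}(|t|,\delta)-\Phi_n(|t|,\delta)$, where the $\Phi_n$ are the Picard iterates of an explicit scalar majorant problem built from $M$ and $L$ whose limit $\Phi$ is a rational function of $t$ — something of the shape $\Phi(t,\delta)=\dfrac{A\,t}{(1-\delta)\bigl(1-\mu t/(1-\delta)\bigr)}$ with $A\sim M$ and $\mu\sim L$ — hence finite and holomorphic on $|t|<a(1-\delta)$ as soon as $a\mu<1$. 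The inductive step is where (2) enters: to estimate $\|F(s,u^{(n)}(s))-F(s,u^{(n-1)}(s))\|_\delta$ one inserts an intermediate level $\delta_s$ with $\delta<\delta_s$ and $|s|<a(1-\delta_s)$, chosen so that the gap $\delta_s-\delta$ and the leftover budget $a(1-\delta_s)-|s|$ are both comparable to $\rho(\delta,s)$, applies the Lipschitz bound, feeds in the inductive estimate at level $\delta_s$, and integrates in $s$ via elementary inequalities such as $\int_0^{|t|}s^{k}(c-s)^{-k-1}\,ds\le |t|^{k+1}\bigl((k+1)(c-|t|)^{k+1}\bigr)^{-1}$. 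Matching the outcome against $\Phi_{n+1}-\Phi_n$ is what fixes the relations among $A,\mu,a$ defining the majorant.

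With the majorant bound in hand, $\Phi_n\nearrow\Phi$ on $|t|<a(1-\delta)$ shows that $(u^{(n)})$ is uniformly Cauchy in $X_\delta$ on compact subsets of that disk; the limit $u$ is $X_\delta$-valued, holomorphic (uniform limit of holomorphic maps), lies in $\overline{B(u_0,R)}$, and passing to the limit in the recursion it solves $u(t)=u_0+\int_0^t F(s,u(s))\,ds$, i.e.\ \eqref{cauchy}. Taking $T_0=a$ gives the asserted domain $|t|<T_0(1-\delta)$. Uniqueness follows by the same device with no forcing term: if $u,v$ are two such solutions then $w=u-v$ obeys $w(t)=\int_0^t[F(s,u(s))-F(s,v(s))]\,ds$, and iterating (2) through intermediate levels exactly as above gives $\|w(t)\|_\delta\le\Psi_n(|t|,\delta)$ for the $n$-th iterate of the homogeneous majorant, which tends to $0$; hence $w\equiv 0$ on every such disk.

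The main obstacle is precisely this quantitative induction: the intermediate levels $\delta_s$ (and, in the fully unrolled $n$-fold form, the whole ladder $\delta=\delta_0<\delta_1<\cdots<\delta_n$) must be chosen so that the accumulated derivative losses, which naively produce a factor growing like $n^n$, are absorbed by the $1/n!$ generated by the iterated time-integration $\int_0^t\!\int_0^{s_1}\!\cdots$, leaving only a geometric factor that the smallness of $a$ (equivalently of $t$) can beat. Everything else — the holomorphy bookkeeping, checking that the iterates stay in $\overline{B(u_0,R)}$, and the passage to the limit — is routine once this balance is correctly arranged, and it is exactly this balance that determines how small $T_0$ must be taken relative to $L$, $M$, $R$ and $T$.
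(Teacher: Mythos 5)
The paper never proves Theorem \ref{O}: it is quoted from the cited classical references, and the only proof machinery actually developed in Section 2 (the auxiliary space of Definition \ref{defi E} with the weighted norm $\sup\,(1-\delta)^\sigma\sqrt{1-|t|/(a(1-\delta)^\sigma)}\,\|u(t)\|_{\delta}$, Lemma \ref{Ea}, and the lifespan formula of Remark \ref{t}) points to a contraction/fixed-point argument in that space, which is the route used for the generalized Theorem \ref{L}. Your proposal is correct in substance but takes a genuinely different, older route: the Nirenberg--Nishida style Picard iteration, after translating to $u_0=0$, with time-dependent intermediate indices $\delta_s$ whose gap $\delta_s-\delta$ is kept comparable to the remaining budget $a(1-\delta)-|s|$, and with a scalar rational majorant $\Phi$ dominating the differences of iterates; the balancing you describe (the $n^n$-type accumulated losses from the $(\delta-\delta')^{-1}$ factor absorbed by the $1/n!$ of iterated time integration, leaving a geometric factor killed by smallness of $a$) is exactly the classical mechanism, and uniqueness by iterating hypothesis (2) on $w=u-v$ is standard. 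Note that you implicitly corrected the paper's typo $\delta'-\delta$ (the gap $\delta-\delta'>0$ is meant). What each approach buys: your majorant argument is elementary and self-contained, but it exploits the first-power loss in an essential way, which is precisely why --- as the paper remarks after Theorem \ref{O} --- it does not survive the Gevrey loss $(\delta-\delta')^{-\sigma}$ with $\sigma>1$; the fixed-point argument in the weighted space works uniformly for $\sigma\ge 1$ and yields the explicit lower bound for $T_0$ of Remark \ref{t}, which the paper actually uses later (the choice of $T_0$ in Theorem \ref{local} and the estimates around \eqref{c4}--\eqref{c6} in Section 4 reuse Lemma \ref{Ea} directly). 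If you write your argument out in full, the only step demanding real care is the one you flagged yourself: verifying that the inductive hypothesis at level $\delta_s$ is applicable (i.e.\ $|s|<a(1-\delta_s)$ and $u^{(n)}(s)$ still in $\overline{B(u_0,R)}\subset X_{\delta_s}$) for your specific choice of $\delta_s$, and matching the resulting integrals against $\Phi_{n+1}-\Phi_n$.
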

The above theorem was first put forwarded by Ovsyannikov in \cite{O1965Singular,Ononlocal,Ononlinear71}. Note that the  original Ovsyannikov
theorem is not valid for  the Gervey class. Owing to this kind of space  cannot such that the condition (2)  hold in Theorem \ref{cauchy}. Hence, for the Gevery class, we prove that
	$$\sup_{|t|< T}\|F(t,u(t))-F(t,v(t))\|_{\delta'}\leq\frac{L}{(\delta'-\delta)^\sigma}\|u-v\|_{\delta},$$
	where $\sigma\geq 1.$ As $\sigma>1,$  the above  inequality is weaker than the condition (2).
Recently,  Luo and Yin build a new auxiliary function to get a generalized  Ovsyannikov
theorem by modifying the proof of \cite{LWgevery}.
\begin{theo}\cite{LWgevery,Zhanglei}\label{L}
	Let $\{X_{\delta}\}_{0<\delta<1}$  be a scale of decreasing Banach spaces, namely, for any $0<\delta'<\delta$, we have $X_\delta\subset X_{\delta'}$ and $\|\cdot\|_{\delta'}\leq\|\cdot\|_{\delta}.$ Given $T,~{R}>0.$ Let $u_0\in X_1,$ assume that\\
	(1) For any $0<\delta'<\delta<1,$ the function $t\mapsto u(t)$ is holomorphic in $|t|< T$ and continuous on $|t|< T$ with values in $X_{\delta}$ and
	$$\sup_{|t|< T}\|u(t)\|_{\delta}<R.$$
	Then $t\mapsto F(t,u(t))$ is a holomorphic function on $|t|<T$ with values in $X_{\delta'}.$\\
	(2) For any $0<\delta'<\delta<1,$ and any $u,~v\in \overline{B(u_0,R)}\subset X_{\delta},$ there exists a positive constant $L$ depending
	on $u_0$ and $R$ such that
	$$\sup_{|t|< T}\|F(t,u(t))-F(t,v(t))\|_{\delta'}\leq\frac{L}{(\delta'-\delta)^\sigma}\|u-v\|_{\delta}.$$\\
	(3) For  any $0<\delta<1,$ there exists a positive constant $M$ depending on $u_0$ such that
	$$\sup_{|t|< T}\|F(t,u_0(t))\|_{\delta'}\leq\frac{M}{1-\delta}.$$
	Then there exists a
	$T_0\in(0,T)$ and a unique solution to the Cauchy problem \eqref{cauchy}, which for any 	$\delta\in (0,1)$ is  holomorphic in $|T|<\frac{T_0(1-\delta)^\sigma D_{\sigma} }{2^\sigma-1}$ with values in $X_{\delta}$   and $D_{\sigma}=\frac{1}{2^{\sigma}-2+\frac{1}{2^{\sigma+1}}}.$
\end{theo}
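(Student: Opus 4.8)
The statement is an abstract Cauchy--Kovalevskaya--Ovsyannikov theorem tuned to a Gevrey scale, so the plan is to run a Picard iteration and control it with an appropriate majorant. Set $u^{0}(t)\equiv u_{0}$ and $u^{n+1}(t)=u_{0}+\int_{0}^{t}F(s,u^{n}(s))\,ds$. First I would show, by induction on $n$ and using hypotheses (1)--(3), that each $u^{n}$ is well defined and $t$-holomorphic and that it never leaves $\overline{B(u_{0},R)}\subset X_{\delta}$ on a disk whose radius shrinks with $\delta$, so that (2) and (3) may be applied at every step. The core is then to bound the telescoping differences: one seeks a double sequence $\{\theta_{n}(t,\delta)\}_{n\ge0}$ with $\|u^{n+1}(t)-u^{n}(t)\|_{\delta}\le\theta_{n}(t,\delta)$. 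Writing $u^{n+1}-u^{n}=\int_{0}^{t}[F(s,u^{n}(s))-F(s,u^{n-1}(s))]\,ds$ and applying (2) once with an intermediate radius $\delta'\in(\delta,1)$, the $\theta_{n}$ should obey the recursion $\theta_{n+1}(t,\delta)\le \frac{L}{(\delta'-\delta)^{\sigma}}\int_{0}^{|t|}\theta_{n}(s,\delta')\,ds$, with $\theta_{0}(t,\delta)=\frac{M|t|}{1-\delta}$ supplied by (3). Everything reduces to choosing $\theta_{n}$ and $\delta'=\delta'(\delta,n)$ so that this recursion is closed and $\sum_{n}\theta_{n}(t,\delta)<\infty$ on a disk of radius $\propto(1-\delta)^{\sigma}$.

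The main obstacle --- and the reason the classical Ovsyannikov scheme of Theorem \ref{O} does not transfer --- is precisely this choice. For $\sigma=1$ one takes the monomial majorant $\theta_{n}(t,\delta)\asymp\frac{MR}{n}(\frac{C|t|}{1-\delta})^{n}$ and advances the radius by $\delta'-\delta\sim(1-\delta)/n$ at each step; but for $\sigma>1$ any allocation of the intermediate radii (which must sum to less than $1-\delta$) forces, by AM--GM, a factor $\prod_{k\le n}(\delta_{k}-\delta_{k-1})^{-\sigma}\gtrsim (n/(1-\delta))^{\sigma n}$, and even after absorbing the $1/(n+1)!$ gained from the nested time integrations this leaves a super-exponential factor of order $(n!)^{\sigma-1}$, so the monomial majorant series diverges for every $t\ne0$. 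To circumvent this I would use the device of \cite{LWgevery}: replace the monomial majorant by a new auxiliary function in which the intermediate radii are advanced by a fixed geometric proportion of $1-\delta$, keyed to the dyadic factor $2^{\sigma}$, and whose dependence on $(t,\delta)$ is calibrated so that the ratio $\theta_{n}(s,\delta')/\theta_{n}(s,\delta)$ stays bounded while the loss $(\delta'-\delta)^{-\sigma}$ is exactly offset by the time integration on the disk $|t|<\frac{T_{0}(1-\delta)^{\sigma}D_{\sigma}}{2^{\sigma}-1}$. Verifying that this recursion genuinely closes --- i.e. that $\sum_{n}\theta_{n}(t,\delta)$ converges uniformly on that disk, with the constant $D_{\sigma}=(2^{\sigma}-2+2^{-(\sigma+1)})^{-1}$ emerging from the geometric summation --- is the technical heart, and I expect all the real work to lie there.

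Granting the majorant, the remaining steps are routine. Uniform convergence of $\{u^{n}\}$ on compact subsets of $\{|t|<\frac{T_{0}(1-\delta)^{\sigma}D_{\sigma}}{2^{\sigma}-1}\}$ with values in $X_{\delta}$ yields a limit $u$ that is holomorphic there (holomorphy passes to locally uniform limits of Banach-space-valued maps) and, by (1) and the recursion, satisfies $u(t)=u_{0}+\int_{0}^{t}F(s,u(s))\,ds$, hence solves \eqref{cauchy}. For uniqueness I would feed two solutions $u,v$ into (2): on a slightly smaller disk this gives $\|u(t)-v(t)\|_{\delta}\le \frac{L}{(\delta'-\delta)^{\sigma}}\int_{0}^{|t|}\|u(s)-v(s)\|_{\delta'}\,ds$, and iterating this inequality with the same auxiliary function to organize the radius losses forces $u\equiv v$. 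Finally, shrinking $T_{0}$ if needed keeps the iterates inside $\overline{B(u_{0},R)}$, which closes the induction invoked at the outset.
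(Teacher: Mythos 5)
You are trying to prove a statement the paper itself does not prove: Theorem \ref{L} is imported verbatim from \cite{LWgevery,Zhanglei}, and what the paper supplies alongside it is precisely the machinery your sketch leaves as a black box, namely the weighted space $E_a$ of Definition \ref{defi E}, the explicit lifespan $T_0$ of Remark \ref{t}, and the key integral estimate of Lemma \ref{Ea} built on the auxiliary function $\delta(\tau)=\tfrac12(1+\delta)+\bigl(\tfrac12\bigr)^{2+\frac1\sigma}\Bigl(\bigl[(1-\delta)^{\sigma}-\tfrac{\tau}{a}\bigr]^{\frac1\sigma}-\bigl[(1-\delta)^{\sigma}+(2^{\sigma+1}-1)\tfrac{\tau}{a}\bigr]^{\frac1\sigma}\Bigr)$. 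Your proposal gets the architecture right (iteration/fixed point, integral equation, uniqueness via hypothesis (2)) and correctly diagnoses why the classical Ovsyannikov ladder of intermediate radii breaks down for $\sigma>1$ (the $(n!)^{\sigma-1}$ loss). But at the decisive moment you write ``use the device of \cite{LWgevery}'' and ``I expect all the real work to lie there.'' That device \emph{is} the content of the theorem: one must exhibit $\delta(\tau)$, check $\delta<\delta(\tau)<1$ on the disk $|t|<\frac{a(1-\delta)^{\sigma}D_{\sigma}}{2^{\sigma}-1}$, prove the quantitative bound $\int_0^t\frac{\|u(\tau)\|_{\delta(\tau)}}{(\delta(\tau)-\delta)^{\sigma}}\,d\tau\le\frac{a\,2^{2\sigma+3}\|u\|_{E_a}}{(1-\delta)^{\sigma}}\sqrt{\frac{a(1-\delta)^{\sigma}}{a(1-\delta)^{\sigma}-t}}$, and then verify that the map $u\mapsto u_0+\int_0^tF(\tau,u(\tau))\,d\tau$ is a contraction (or that the iterates converge) on a ball of $E_{T_0}$ with $T_0$ as in Remark \ref{t}. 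None of this is carried out, so as it stands your write-up is a plan rather than a proof of the hard step.

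Beyond incompleteness, your description of the fix is structurally off in two respects. In the cited argument there is no discrete ladder of intermediate radii advanced ``by a fixed geometric proportion of $1-\delta$'' across Picard steps: all functions live in the single space $E_a$, whose elements are defined for $t$ in $\delta$-dependent disks, and the loss $(\delta(\tau)-\delta)^{-\sigma}$ from hypothesis (2) is absorbed \emph{once}, inside the time integral, because the index $\delta(\tau)$ moves continuously with $\tau$; the weight $(1-\delta)^{\sigma}\sqrt{1-\tfrac{|t|}{a(1-\delta)^{\sigma}}}$ in the $E_a$-norm is what makes this integral finite. Correspondingly, $D_{\sigma}=(2^{\sigma}-2+2^{-(\sigma+1)})^{-1}$ does not ``emerge from a geometric summation'' over iteration steps; it is forced by the admissibility of $\delta(\tau)$ (keeping it strictly between $\delta$ and $1$, and the square-root weight positive) on the stated disk. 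A complete proof therefore requires reproducing the construction of $\delta(\tau)$ and Lemma \ref{Ea} and then running the contraction with the constants of Remark \ref{t}; an appeal to ``the device of \cite{LWgevery}'' at that point is a citation, not an argument --- which, to be fair, is exactly how the paper itself treats this theorem.
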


Now, we give a  Banach spaces, which can be used to prove Theorem \ref{L}  by using the fixed point argument.
\begin{defi}\cite{LWgevery,Zhanglei}\label{defi E}
	Let $\sigma\geq 1.$ For any $a>0,$ we represent by $E_a$  the space of functions $u(t)$ which for each $0<\delta<1$ and $|t|<\frac{a(1-\delta)^\sigma  D_{\sigma}}{2^\sigma-1}$ with $D_{\sigma}=\frac{1}{2^{\sigma}-2+\frac{1}{2^{\sigma+1}}}$ are holomorphic  and continuous functions of $t$ with values in $X_{\delta}$ such that
	$$\|u\|_{E_T}:=\sup_{|t|<\frac{a(1-\delta)^\sigma}{2^\sigma-1}}\Big(\|u(t)\|_{G^{\delta}_{\sigma,s}}(1-\delta)^{\sigma}\sqrt{1-\frac{|t|}{a(1-\delta)^{\sigma}}}\Big)<+\infty.$$
\end{defi}

\begin{rema}\cite{LWgevery,Zhanglei}\label{t}
	Indeed, $T_0={\min}\{\frac{1}{2^{2\sigma+4}L},~\frac{(2^{\sigma}-1)R}{(2^{\sigma}-1)2^{2\sigma+3}LR+MD_{\sigma}}\}$  with $D_{\sigma}=\frac{1}{2^{\sigma}-2+\frac{1}{2^{\sigma+1}}}$, which provides a lower bound of the lifespan.
\end{rema}
\begin{rema}\label{t1}
	If $\sigma=1,$ Theorem \ref{cauchy} reduces to the so-called abstract Cauchy-Kovalevsky theorem.
\end{rema}
Now, we give the definition of Sobolev-Gevery spaces and recall some properties are often used.
\begin{defi}\cite{FoiasGEVERYNS}\label{defi gevery}
	Let $\sigma,~\delta>0$ and $s$ be  a real number. A function $f\in G^{\delta}_{\sigma,s}(\mathbb{R}^d)$ if and only of $f\in \mathcal{C}^{\infty}(\mathbb{R}^d)$ and satisfies
	$$\|f\|_{G^{\delta}_{\sigma,s}}=\Big(\int_{\mathbb{R}}(1+|\xi|^2)^s e^{2\delta(1+|\xi|^2)^{\frac{1}{2\sigma}}}|\hat{f}(\xi)|^2d\xi\Big)^{\frac 1 2}<\infty.$$
	A function $f\in G^{\delta}_{\sigma,s}(\mathbb{R}^d)$ if and only of $f\in \mathcal{C}^{\infty}(\mathbb{R}^d)$ and satisfies
	$$\|f\|_{\bar{G}^{\delta}_{\sigma,s}}=\Big(\int_{\mathbb{R}}(1+|\xi|^2)^s e^{2\delta|\xi|^{\frac{1}{\sigma}}}|\hat{f}(\xi)|^2d\xi\Big)^{\frac 1 2}<\infty.$$
\end{defi}
\begin{defi}\cite{MM}\label{class}
	A function is of Gevrey class $\sigma\geq 1$, if there exist $\delta>0,~s\geq 0$ such that  $f\in G^{\delta}_{\sigma,s}(\mathbb{R}^d).$
	Denote the the functions of Gevrey class $\sigma$
	$$f\in G_{\sigma}(\mathbb{R}^d):=\cup_{\delta>0.s\in \mathbb{R}}G^{\delta}_{\sigma,s}(\mathbb{R}^d).$$
\end{defi}
\begin{rema}\cite{hehuij}\label{oper}
	Denote the Fourier multiplier $e^{\delta{\Lambda}^{\frac{1}{\sigma}}}$ and $e^{\delta(-\Delta)^{\frac{1}{2\sigma}}}$ by\\
	
		$~~~~~~~~~~~~~~~~~$$e^{\delta{\Lambda}^{\frac{1}{\sigma}}}f=\mathcal{F}^{-1}\Big(e^{\delta(1+|\xi|^2)^{\frac{1}{2\sigma}}}\hat{f}\Big)$ $~~$and$~~$ $e^{\delta(-\Delta)^{\frac{1}{2\sigma}}}f=\mathcal{F}^{-1}\Big(e^{\delta|\xi|^{\frac{1}{\sigma}}}\hat{f}\Big).$	$~~~~~~~~~~~~~~~~~~~~~~~~~~$
	Then we have $\|f\|_{G^{\delta}_{\sigma,s}(\mathbb{R}^d)}=\|e^{\delta{\Lambda}^{\frac{1}{\sigma}}}f\|_{H^s(\mathbb{R}^d)}$ and $\|f\|_{\bar{G}^{\delta}_{\sigma,s}}=\|e^{\delta(-\Delta)^{\frac{1}{2\sigma}}}f\|_{H^s(\mathbb{R}^d)}.$ Moreover, for  any $\sigma\geq 1,$ we get
	$$\|f\|_{\bar{G}^{\delta}_{\sigma,s}}\leq \|f\|_{{G}^{\delta}_{\sigma,s}} \leq e^{\delta}\|f\|_{\bar{G}^{\delta}_{\sigma,s}}.$$ If $0<\sigma<1,$ it is called ultra-analytic function. If $\sigma=1,$ it is usual analytic function and $\delta$ is called the radius of analyticity. If $\delta>1,$ it is the Gevery class function.
\end{rema}
\begin{prop}\label{embedding}
	If  $0<\delta'<\delta,~0<\sigma'<\sigma$ and $s'<s.$ Then we have $G^{\delta}_{\sigma,s}(\mathbb{R}^d)\hookrightarrow G^{\delta'}_{\sigma,s}(\mathbb{R}^d),~G^{\delta}_{\sigma',s}(\mathbb{R}^d)\hookrightarrow G^{\delta}_{\sigma,s}(\mathbb{R}^d)$ and $G^{\delta}_{\sigma,s}(\mathbb{R}^d)\hookrightarrow G^{\delta}_{\sigma,s'}(\mathbb{R}^d)$.
\end{prop}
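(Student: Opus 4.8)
The plan is to establish all three continuous inclusions by a single elementary device: on the Fourier side each $G^{\delta}_{\sigma,s}$-norm is an $L^2$-norm of $\hat f$ against a weight,
$$\|f\|_{G^{\delta}_{\sigma,s}}^2=\int_{\mathbb{R}^d}(1+|\xi|^2)^s e^{2\delta(1+|\xi|^2)^{\frac{1}{2\sigma}}}|\hat f(\xi)|^2\,d\xi,$$
so it suffices, in each of the three situations, to check that the weight of the target space is pointwise dominated by the weight of the source space; monotonicity of the Lebesgue integral then yields the embedding, in fact with embedding constant $1$. The one elementary fact I will use throughout is that $1+|\xi|^2\ge 1$ for every $\xi\in\mathbb{R}^d$, so that $p\mapsto(1+|\xi|^2)^p$ is nondecreasing and, for fixed exponent, $c\mapsto e^{c(1+|\xi|^2)^{\frac{1}{2\sigma}}}$ is nondecreasing in $c$.

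First I would treat $G^{\delta}_{\sigma,s}\hookrightarrow G^{\delta'}_{\sigma,s}$ for $0<\delta'<\delta$: since $(1+|\xi|^2)^{\frac{1}{2\sigma}}\ge 0$, enlarging the exponential coefficient from $2\delta'$ to $2\delta$ only increases the weight, while the Sobolev factor $(1+|\xi|^2)^s$ is identical in both norms, so $\|f\|_{G^{\delta'}_{\sigma,s}}\le\|f\|_{G^{\delta}_{\sigma,s}}$. Next, for $G^{\delta}_{\sigma',s}\hookrightarrow G^{\delta}_{\sigma,s}$ with $0<\sigma'<\sigma$, I note $\frac{1}{2\sigma}<\frac{1}{2\sigma'}$ and invoke monotonicity of $p\mapsto(1+|\xi|^2)^p$ to get $(1+|\xi|^2)^{\frac{1}{2\sigma}}\le(1+|\xi|^2)^{\frac{1}{2\sigma'}}$; multiplying by the fixed coefficient $2\delta>0$ and exponentiating dominates the $G^{\delta}_{\sigma,s}$-weight by the $G^{\delta}_{\sigma',s}$-weight, hence $\|f\|_{G^{\delta}_{\sigma,s}}\le\|f\|_{G^{\delta}_{\sigma',s}}$. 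Finally, $G^{\delta}_{\sigma,s}\hookrightarrow G^{\delta}_{\sigma,s'}$ for $s'<s$ is immediate from $(1+|\xi|^2)^{s'}\le(1+|\xi|^2)^{s}$, the Gevrey exponential $e^{2\delta(1+|\xi|^2)^{\frac{1}{2\sigma}}}$ being common to both norms.

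There is essentially no analytical obstacle: the only points requiring care are the direction of each of the three inequalities and the observation that $1+|\xi|^2$ is bounded below by $1$ — without which neither the power comparison nor the monotonicity in the exponential coefficient is valid — together with the fact that in each statement precisely one parameter is varied while the other two are held fixed, so that no frequency splitting or interpolation is needed. I would also record in passing, using Remark \ref{oper} (where $\|f\|_{G^{\delta}_{\sigma,s}}=\|e^{\delta\Lambda^{1/\sigma}}f\|_{H^s}$ and $\|\cdot\|_{\bar G^{\delta}_{\sigma,s}}\le\|\cdot\|_{G^{\delta}_{\sigma,s}}\le e^{\delta}\|\cdot\|_{\bar G^{\delta}_{\sigma,s}}$), that the analogous monotonicity in $\delta$ and in $s$ transfers to the $\bar G$-scale up to the harmless constant $e^{\delta}$.
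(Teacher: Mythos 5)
Your proof is correct: comparing the Fourier-side weights pointwise in each of the three cases (monotonicity in $\delta$ of the exponential factor, monotonicity of $p\mapsto(1+|\xi|^2)^p$ using $1+|\xi|^2\ge1$ for the $\sigma$-comparison, and monotonicity in $s$ of the Sobolev factor) is exactly the standard argument, and it yields all three embeddings with constant $1$. The paper states Proposition \ref{embedding} without proof, treating it as immediate from Definition \ref{defi gevery}, so your write-up simply supplies the routine verification the authors omitted; the only tiny quibble is that for the $\delta$-monotonicity one needs only nonnegativity of $(1+|\xi|^2)^{\frac{1}{2\sigma}}$, not the lower bound by $1$, which is required solely for the $\sigma$-comparison.
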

\begin{prop}\cite{LWgevery}\label{d'd}
	Let $f\in G^{\delta}_{\sigma,s}(\mathbb{R}).$ Assume that  $0<\delta'<\delta,~\sigma>0$ and $s\in \mathbb{R}$. Then we have
		\begin{align*}
			\|\partial_{x}f\|_{G^{\delta'}_{\sigma,s}(\mathbb{R})}&\leq \frac{e^{-\sigma}\sigma^{\sigma}}{2(\delta'-\delta)^{\sigma}}\|f\|_{G^{\delta}_{\sigma,s}(\mathbb{R})},\\
		\|(1-\partial_{xx})^{-1}f\|_{G^{\delta'}_{\sigma,s}(\mathbb{R})}&\leq \|f\|_{G^{\delta}_{\sigma,s-2}(\mathbb{R})}\leq \|f\|_{G^{\delta}_{\sigma,s}(\mathbb{R})},\\
		\|(1-\partial_{xx})^{-1}\partial_{x}f\|_{G^{\delta'}_{\sigma,s}(\mathbb{R})}&\leq \|f\|_{G^{\delta}_{\sigma,s-1}(\mathbb{R})}\leq \|f\|_{G^{\delta}_{\sigma,s}(\mathbb{R})}.
	\end{align*}
\end{prop}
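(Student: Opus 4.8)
The plan is to work entirely on the Fourier side and reduce each of the three inequalities to an elementary bound on the corresponding Fourier multiplier. By Definition \ref{defi gevery} (with $d=1$) and Remark \ref{oper}, for every admissible $g$ one has $\|g\|_{G^{\delta}_{\sigma,s}}^{2}=\int_{\mathbb R}(1+|\xi|^{2})^{s}e^{2\delta(1+|\xi|^{2})^{1/(2\sigma)}}|\widehat g(\xi)|^{2}\,d\xi$, and the operators $\partial_{x}$, $(1-\partial_{xx})^{-1}$, $(1-\partial_{xx})^{-1}\partial_{x}$ act as multiplication by $i\xi$, $(1+|\xi|^{2})^{-1}$, $i\xi(1+|\xi|^{2})^{-1}$ respectively. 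Writing $\langle\xi\rangle:=(1+|\xi|^{2})^{1/2}\ge 1$, so that $(1+|\xi|^{2})^{1/(2\sigma)}=\langle\xi\rangle^{1/\sigma}$, the exponential weights are handled in all three cases simultaneously by the hypothesis $\delta'<\delta$, which gives $e^{2\delta'\langle\xi\rangle^{1/\sigma}}\le e^{2\delta\langle\xi\rangle^{1/\sigma}}$ for every $\xi$. After invoking Plancherel's identity it therefore only remains to compare the polynomial parts of the two integrands.

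For the two estimates involving $(1-\partial_{xx})^{-1}$ this comparison is immediate. From $(1+|\xi|^{2})^{-1}=\langle\xi\rangle^{-2}$ and $|\xi|(1+|\xi|^{2})^{-1}\le\langle\xi\rangle^{-1}$, squaring the symbols and inserting them into the integral produces the Sobolev weights $\langle\xi\rangle^{2s}\langle\xi\rangle^{-4}=\langle\xi\rangle^{2(s-2)}$ and $\langle\xi\rangle^{2s}\langle\xi\rangle^{-2}=\langle\xi\rangle^{2(s-1)}$, whence $\|(1-\partial_{xx})^{-1}f\|_{G^{\delta'}_{\sigma,s}}\le\|f\|_{G^{\delta}_{\sigma,s-2}}$ and $\|(1-\partial_{xx})^{-1}\partial_{x}f\|_{G^{\delta'}_{\sigma,s}}\le\|f\|_{G^{\delta}_{\sigma,s-1}}$. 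The remaining inequality in each of these two lines then follows from $\langle\xi\rangle\ge 1$, that is, from the monotonicity $\|f\|_{G^{\delta}_{\sigma,s'}}\le\|f\|_{G^{\delta}_{\sigma,s}}$ for $s'\le s$ already recorded in Proposition \ref{embedding}.

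The only estimate requiring genuine work is the one for $\partial_{x}$. After Plancherel it suffices to establish the pointwise inequality $|\xi|\,e^{-(\delta-\delta')\langle\xi\rangle^{1/\sigma}}\le C$ for all $\xi\in\mathbb R$, with $C=\dfrac{e^{-\sigma}\sigma^{\sigma}}{2(\delta-\delta')^{\sigma}}$: indeed, multiplying the integrand of $\|\partial_{x}f\|_{G^{\delta'}_{\sigma,s}}^{2}$ by $e^{-2(\delta-\delta')\langle\xi\rangle^{1/\sigma}}e^{2(\delta-\delta')\langle\xi\rangle^{1/\sigma}}$ and using $|\xi|^{2}e^{-2(\delta-\delta')\langle\xi\rangle^{1/\sigma}}\le C^{2}$ gives $\|\partial_{x}f\|_{G^{\delta'}_{\sigma,s}}\le C\|f\|_{G^{\delta}_{\sigma,s}}$. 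To get the pointwise bound I would use $|\xi|\le\langle\xi\rangle$ and substitute $\tau=\langle\xi\rangle^{1/\sigma}\in[1,\infty)$, so that the left-hand side is dominated by $\varphi(\tau):=\tau^{\sigma}e^{-(\delta-\delta')\tau}$; the function $\varphi$ on $(0,\infty)$ is maximised at $\tau_{*}=\sigma/(\delta-\delta')$ with $\varphi(\tau_{*})=\bigl(\sigma/(\delta-\delta')\bigr)^{\sigma}e^{-\sigma}$, which is of the stated form. The main — and essentially only — obstacle is to carry out this single‑variable optimisation carefully enough to recover precisely the constant appearing in the statement; once the three pointwise/symbol estimates are in place, assembling them via Plancherel's theorem completes the proof.
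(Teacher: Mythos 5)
Your Fourier-multiplier reduction is the right approach, and it is in fact the standard proof of Proposition \ref{d'd} (the paper itself gives no proof, citing \cite{LWgevery}; the argument there is the same symbol estimate you describe). The second and third inequalities are complete as you present them: the symbols $(1+\xi^{2})^{-1}$ and $|\xi|(1+\xi^{2})^{-1}\le (1+\xi^{2})^{-1/2}$, combined with $e^{2\delta'(1+\xi^{2})^{1/(2\sigma)}}\le e^{2\delta(1+\xi^{2})^{1/(2\sigma)}}$ and the monotonicity in $s$, give exactly the displayed chains of inequalities.

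The one genuine issue is the constant in the first inequality, and it is not of the kind you anticipate. Your optimisation is already sharp: $\sup_{\tau>0}\tau^{\sigma}e^{-(\delta-\delta')\tau}=e^{-\sigma}\sigma^{\sigma}(\delta-\delta')^{-\sigma}$, so your argument yields the derivative estimate with constant $e^{-\sigma}\sigma^{\sigma}/(\delta-\delta')^{\sigma}$, which is \emph{twice} the constant displayed in the statement, and no amount of extra care in the single-variable optimisation will produce the missing factor $\tfrac12$. Indeed, the best possible constant in this weighted-$L^{2}$ multiplier inequality is exactly $\sup_{\xi\in\mathbb{R}}|\xi|\,e^{-(\delta-\delta')(1+\xi^{2})^{1/(2\sigma)}}$ (concentrate $\hat f$ near a maximising frequency), and evaluating at the frequency where $(1+\xi^{2})^{1/(2\sigma)}=\sigma/(\delta-\delta')$ shows this supremum already exceeds $\tfrac12 e^{-\sigma}\sigma^{\sigma}(\delta-\delta')^{-\sigma}$ whenever $\delta-\delta'$ is small (for $\sigma=1$ it does so as soon as $\delta-\delta'<\sqrt{3}/2$). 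So the factor $\tfrac12$ in the statement, like the expression $(\delta'-\delta)^{\sigma}$ which should read $(\delta-\delta')^{\sigma}$, is an inaccuracy in the quoted formulation rather than something to be proved; the bound you actually obtain, $\|\partial_{x}f\|_{G^{\delta'}_{\sigma,s}}\le e^{-\sigma}\sigma^{\sigma}(\delta-\delta')^{-\sigma}\|f\|_{G^{\delta}_{\sigma,s}}$, is the correct one, and it is all that is needed later, since in \eqref{e0}--\eqref{e2} this constant is simply absorbed into $C'$. Your write-up should state that constant and drop the sentence deferring the recovery of the factor $\tfrac12$.
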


\begin{prop}\label{fg}
	Let $s>\frac 1 2,~\sigma\geq 1$ and $\delta>0.$ Then $G^{\delta}_{\sigma,s}(\mathbb{R})$ is an algebra. Moreover, there exists two constants $C_s$ and $\bar{C}_s$ such that
	\begin{align*}	
	\|fg\|_{G^{\delta}_{\sigma,s}(\mathbb{R})}&\leq C_s\|f\|_{G^{\delta}_{\sigma,s}(\mathbb{R})}\|g\|_{G^{\delta}_{\sigma,s}(\mathbb{R})},\\
	\|fg\|_{G^{\delta}_{\sigma,s-1}(\mathbb{R})}&\leq \bar{C}_s\|f\|_{G^{\delta}_{\sigma,s}(\mathbb{R})}\|g\|_{G^{\delta}_{\sigma,s-1}(\mathbb{R})}.
\end{align*}
\end{prop}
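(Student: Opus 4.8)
The plan is to transplant the classical product laws of $H^s(\mathbb{R})$ to $G^{\delta}_{\sigma,s}(\mathbb{R})$ via the Fourier-multiplier identity $\|f\|_{G^{\delta}_{\sigma,s}}=\|e^{\delta\Lambda^{1/\sigma}}f\|_{H^s}$ recorded in Remark \ref{oper}. The only genuinely new ingredient — and the step where the hypothesis $\sigma\geq 1$ is really used — is a subadditivity property of the Gevrey Fourier weight; everything after it is a routine transcription of the Euclidean Sobolev product estimates.

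First I would establish the weight inequality. Writing $\langle\xi\rangle:=(1+|\xi|^2)^{1/2}$, one has $\langle\xi\rangle\leq\langle\xi-\eta\rangle+\langle\eta\rangle$ for all $\xi,\eta\in\mathbb{R}$ (square the right-hand side and use $|\xi|\leq|\xi-\eta|+|\eta|$ together with $\langle\xi-\eta\rangle\langle\eta\rangle\geq|\xi-\eta||\eta|$ and the harmless extra constant). Since $\sigma\geq 1$, the map $t\mapsto t^{1/\sigma}$ is concave on $[0,\infty)$ and vanishes at $0$, hence subadditive, so
\[
(1+|\xi|^2)^{\frac{1}{2\sigma}}=\langle\xi\rangle^{\frac1\sigma}\leq\big(\langle\xi-\eta\rangle+\langle\eta\rangle\big)^{\frac1\sigma}\leq\langle\xi-\eta\rangle^{\frac1\sigma}+\langle\eta\rangle^{\frac1\sigma}=(1+|\xi-\eta|^2)^{\frac1{2\sigma}}+(1+|\eta|^2)^{\frac1{2\sigma}},
\]
and therefore, for every $\delta>0$, $\ e^{\delta(1+|\xi|^2)^{1/(2\sigma)}}\leq e^{\delta(1+|\xi-\eta|^2)^{1/(2\sigma)}}\,e^{\delta(1+|\eta|^2)^{1/(2\sigma)}}$.

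Next I would set $F:=e^{\delta\Lambda^{1/\sigma}}f$ and $G:=e^{\delta\Lambda^{1/\sigma}}g$, so that $\widehat F(\xi)=e^{\delta(1+|\xi|^2)^{1/(2\sigma)}}\hat f(\xi)$, similarly for $G$, and $\|f\|_{G^{\delta}_{\sigma,r}}=\|F\|_{H^r}$, $\|g\|_{G^{\delta}_{\sigma,r}}=\|G\|_{H^r}$ for every $r$. Since $\widehat{fg}=\hat f*\hat g$, the weight inequality gives the pointwise bound
\[
\big|\mathcal{F}\big(e^{\delta\Lambda^{1/\sigma}}(fg)\big)(\xi)\big|=e^{\delta(1+|\xi|^2)^{1/(2\sigma)}}\Big|\int_{\mathbb{R}}\hat f(\xi-\eta)\hat g(\eta)\,d\eta\Big|\leq\big(|\widehat F|*|\widehat G|\big)(\xi).
\]
Introducing $\tilde F:=\mathcal{F}^{-1}(|\widehat F|)$ and $\tilde G:=\mathcal{F}^{-1}(|\widehat G|)$, which satisfy $\|\tilde F\|_{H^r}=\|F\|_{H^r}$, $\|\tilde G\|_{H^r}=\|G\|_{H^r}$ and $|\widehat F|*|\widehat G|=\widehat{\tilde F\,\tilde G}$, one obtains $\|fg\|_{G^{\delta}_{\sigma,r}}=\|e^{\delta\Lambda^{1/\sigma}}(fg)\|_{H^r}\leq\|\tilde F\,\tilde G\|_{H^r}$ for every $r$.

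Finally I would invoke the classical product estimates on $\mathbb{R}$: for $s>\frac12$ there are constants $C_s,\bar C_s$ with $\|\varphi\psi\|_{H^s}\leq C_s\|\varphi\|_{H^s}\|\psi\|_{H^s}$ and $\|\varphi\psi\|_{H^{s-1}}\leq\bar C_s\|\varphi\|_{H^s}\|\psi\|_{H^{s-1}}$ (the second being the bilinear Sobolev estimate $H^s\times H^{s-1}\hookrightarrow H^{s-1}$, valid precisely because $s>\frac12$). Applying these with $\varphi=\tilde F$, $\psi=\tilde G$ and $r=s$, respectively $r=s-1$, and using $\|\tilde F\|_{H^s}=\|f\|_{G^{\delta}_{\sigma,s}}$, $\|\tilde G\|_{H^s}=\|g\|_{G^{\delta}_{\sigma,s}}$, $\|\tilde G\|_{H^{s-1}}=\|g\|_{G^{\delta}_{\sigma,s-1}}$, yields both claimed inequalities; in particular $G^{\delta}_{\sigma,s}(\mathbb{R})$ is an algebra. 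The main (indeed essentially the only) obstacle is the weight inequality of the second paragraph: it is exactly there that $\sigma\geq 1$ enters, since for $\sigma<1$ the function $t\mapsto t^{1/\sigma}$ is convex and superadditive and this line of argument breaks down.
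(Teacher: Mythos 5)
Your argument is correct. The paper states Proposition \ref{fg} without giving a proof (it is quoted as the standard product law for Gevrey--Sobolev spaces, as in the cited works of Luo--Yin and Foias--Temam type), and your route --- subadditivity of the weight $(1+|\xi|^2)^{\frac{1}{2\sigma}}$ for $\sigma\geq 1$, the pointwise convolution bound via $e^{\delta\Lambda^{1/\sigma}}$, and reduction to the classical $H^s(\mathbb{R})$ product estimates for $s>\frac12$ --- is exactly the standard argument the paper implicitly relies on, so there is nothing to flag.
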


\begin{lemm}\cite{LWgevery,Zhanglei}\label{Ea}
	Let $\sigma\geq 1.$ For each $a>0,~u\in E_a,~0<\delta<1$ and $0\leq t<\frac{a(1-\delta)^\sigma D_{\sigma}}{2^\sigma-1}$ with $D_{\sigma}=\frac{1}{2^{\sigma}-2+\frac{1}{2^{\sigma+1}}}.$ Then we have
	$$\int_0^t \frac{\|u(\tau)\|_{\delta(\tau)}}{(\delta(\tau)-\delta)^{\sigma}}\leq \frac{a2^{2\delta+3}	\|u\|_{E_a}}{(1-\delta)^\sigma}\sqrt{\frac{a(1-\delta)^{\sigma}}{a(1-\delta)^{\sigma}-t}},$$
	where $$\delta(\tau)=\frac 1 2 (1+\delta)+(\frac 1 2)^{2+\frac{1}{\sigma}}\Big([(1-\delta)^{\sigma}-\frac t a]^{\frac{1}{\sigma}}-[(1-\delta)^{\sigma}+(2^{\sigma+1}-1)\frac t a]^{\frac{1}{\sigma}}\Big)\in (0,1).$$
\end{lemm}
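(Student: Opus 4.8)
The plan is to collapse the $X_{\delta}$-valued integral to a purely scalar one by invoking Definition~\ref{defi E} at the \emph{moving} radius $\delta'=\delta(\tau)$, and then to bound that scalar integral by writing its integrand as a constant multiple of a perfect $\tau$-derivative. Throughout set $\beta=(1-\delta)^{\sigma}$ and $s=\tau/a$; the only analytic ingredient beyond one-variable calculus is the concavity of $x\mapsto x^{1/\sigma}$ on $(0,\infty)$ for $\sigma\ge1$.

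First I would record the elementary properties of the auxiliary radius. From the defining formula $\delta(0)=\tfrac12(1+\delta)$; the bracket $[\beta-s]^{1/\sigma}-[\beta+(2^{\sigma+1}-1)s]^{1/\sigma}$ is nonpositive and decreasing in $s$, so $\tau\mapsto\delta(\tau)$ is strictly decreasing and, on the admissible $\tau$-interval,
\[
\delta\ \le\ \delta(\tau)\ \le\ \tfrac12(1+\delta),\qquad \tfrac12(1-\delta)\ \le\ 1-\delta(\tau)\ <\ 1-\delta .
\]
In particular $\delta(\tau)\in(0,1)$, so the integrand makes sense, and the upper limit on $\tau$ in the hypothesis is precisely what keeps these inequalities, and the ``slice condition'' $\tau<a(1-\delta(\tau))^{\sigma}/(2^{\sigma}-1)$ used below, valid. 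Next, since $s\le\beta$ on this interval one checks $\tfrac12(1-\delta)\ge 2^{-(2+1/\sigma)}[\beta+(2^{\sigma+1}-1)s]^{1/\sigma}$, whence the defining formula gives
\[
\delta(\tau)-\delta\ \ge\ 2^{-(2+1/\sigma)}[\beta-s]^{1/\sigma},\qquad\text{so}\qquad (\delta(\tau)-\delta)^{\sigma}\ \ge\ 2^{-(2\sigma+1)}\,a^{-1}\big(a(1-\delta)^{\sigma}-\tau\big).
\]
This is the device that converts the singular factor $(\delta(\tau)-\delta)^{-\sigma}$ into the explicit $(a(1-\delta)^{\sigma}-\tau)^{-1}$.

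Taking $\delta'=\delta(\tau)$ in Definition~\ref{defi E} yields the pointwise bound $\|u(\tau)\|_{\delta(\tau)}\le \sqrt a\,\|u\|_{E_a}\big((1-\delta(\tau))^{\sigma/2}\sqrt{a(1-\delta(\tau))^{\sigma}-\tau}\big)^{-1}$. Combining this with the lower bound for $(\delta(\tau)-\delta)^{\sigma}$ just obtained and a lower bound for $a(1-\delta(\tau))^{\sigma}-\tau$ in terms of $a(1-\delta)^{\sigma}-\tau$ (from the convexity of $x\mapsto x^{\sigma}$ together with $\delta(\tau)-\delta\le\tfrac12(1-\delta)$) produces
\[
\frac{\|u(\tau)\|_{\delta(\tau)}}{(\delta(\tau)-\delta)^{\sigma}}\ \le\ C_{\sigma}\,\|u\|_{E_a}\,a^{3/2}(1-\delta)^{-\sigma/2}\big(a(1-\delta)^{\sigma}-\tau\big)^{-3/2}
\]
for an explicit constant $C_{\sigma}$. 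Since $\big(a(1-\delta)^{\sigma}-\tau\big)^{-3/2}=2\,\dfrac{d}{d\tau}\big(a(1-\delta)^{\sigma}-\tau\big)^{-1/2}$, integrating over $[0,t]$, discarding the negative $\tau=0$ term, and using $\big(a(1-\delta)^{\sigma}-t\big)^{-1/2}=\big(a(1-\delta)^{\sigma}\big)^{-1/2}\sqrt{a(1-\delta)^{\sigma}/(a(1-\delta)^{\sigma}-t)}$ reproduces exactly the right-hand side of the lemma, with the constant $2^{2\sigma+3}$ (the exponent $2\delta$ in the statement being a misprint for $2\sigma$).

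The scalar calculus is routine; the real work is to pin $C_{\sigma}\le 2^{2\sigma+2}$ for general $\sigma>1$. Estimating the two factors $(1-\delta(\tau))^{\sigma/2}$ and $\sqrt{a(1-\delta(\tau))^{\sigma}-\tau}$ separately is too wasteful, so one must keep the product $(1-\delta(\tau))^{\sigma/2}\sqrt{a(1-\delta(\tau))^{\sigma}-\tau}$ coupled and bound it from below directly, exploiting the precise shape of $\delta(\tau)$ — in particular the constants $2^{-(2+1/\sigma)}$ and $2^{\sigma+1}-1$ built into it — together with the concavity of $x\mapsto x^{1/\sigma}$. That estimate, not anything concerning $u$ itself, is the crux of the argument; everything else follows the template in \cite{LWgevery,Zhanglei}.
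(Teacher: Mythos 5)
First, note that the paper does not prove this lemma at all: it is quoted verbatim from \cite{LWgevery,Zhanglei}, so your proposal can only be measured against the argument in those references, whose template you are indeed following (bound $\|u(\tau)\|_{\delta(\tau)}$ via the $E_a$-norm at the moving radius, convert $(\delta(\tau)-\delta)^{-\sigma}$ into $(a(1-\delta)^{\sigma}-\tau)^{-1}$, integrate $(a(1-\delta)^{\sigma}-\tau)^{-3/2}$). Your elementary facts are correct: monotonicity of $\delta(\tau)$, $\tfrac12(1-\delta)\le 1-\delta(\tau)$, and the lower bound $(\delta(\tau)-\delta)^{\sigma}\ge 2^{-(2\sigma+1)}a^{-1}\bigl(a(1-\delta)^{\sigma}-\tau\bigr)$ (which uses only $\tau/a\le(1-\delta)^{\sigma}$), as is the observation that $2^{2\delta+3}$ is a misprint for $2^{2\sigma+3}$ (and $t$ for $\tau$ in the formula for $\delta(\tau)$).

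The genuine gap is the step you yourself label ``the crux'': the lower bound on the coupled quantity $(1-\delta(\tau))^{\sigma/2}\sqrt{a(1-\delta(\tau))^{\sigma}-\tau}$ in terms of $(1-\delta)^{\sigma/2}\sqrt{a(1-\delta)^{\sigma}-\tau}$ is never established, and the justification you do offer in the middle of the argument (``convexity of $x\mapsto x^{\sigma}$ together with $\delta(\tau)-\delta\le\tfrac12(1-\delta)$'') cannot deliver it. That route only gives $(1-\delta(\tau))^{\sigma}\ge 2^{-\sigma}(1-\delta)^{\sigma}$, hence $a(1-\delta(\tau))^{\sigma}-\tau\ge 2^{-\sigma}a(1-\delta)^{\sigma}-\tau$; since $D_{\sigma}>2^{-\sigma}$, this does not even show $1-\tau/(a(1-\delta(\tau))^{\sigma})>0$, nor the ``slice condition'' $\tau<\frac{a(1-\delta(\tau))^{\sigma}}{2^{\sigma}-1}$ needed to invoke Definition~\ref{defi E} at radius $\delta(\tau)$, over the whole stated range $0\le t<\frac{a(1-\delta)^{\sigma}D_{\sigma}}{2^{\sigma}-1}$; and where it does apply, the comparison constant degrades like $2^{-\sigma}$, so after taking square roots the final constant comes out of order $2^{3\sigma}$ rather than $2^{2\sigma+3}$. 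So your displayed pointwise bound with ``an explicit constant $C_{\sigma}$'' is not obtained by the means you cite, and the subsequent claim that integration ``reproduces exactly the right-hand side of the lemma'' rests on the unproved estimate $C_{\sigma}\le 2^{2\sigma+2}$. That estimate is exactly where the specific constants $2^{\sigma+1}-1$, $2^{-(2+1/\sigma)}$ and $D_{\sigma}$ in the definition of $\delta(\tau)$ must be exploited (it is the delicate point that \cite{Zhanglei} corrected in \cite{LWgevery}), so without it the proof of the lemma as stated is incomplete: what you have is a correct reduction plus an accurate diagnosis of the hard step, not a proof of it.
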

\begin{lemm}\cite{MengYin}\label{hsglobal}
		Let $u_0 \in B^{s}_{p,r}$ with  $s>\frac{3}{2},~p=r=2.$  A  constant $\epsilon$ such that if
	\begin{align}\label{ss0}
		H_0	\triangleq|\alpha|+|\Gamma|+\|u_0\|_{B^s_{p,r}}+\frac{|\beta|}{3}\|u_0\|^2_{B^s_{p,r}}+\frac{|\gamma|}{4}\|u_0\|^3_{B^s_{p,r}}\leq \lambda \epsilon,
	\end{align}
	for all real numbers $\alpha, \beta, \gamma, \Gamma$ and $\lambda>0.$ 
	Then the solution to the system \eqref{u}  exists globally in time. Moreover, for any $t\in [0,\infty)$ we have
	\begin{align}\label{small}
		H (t)\triangleq|\alpha|+|\Gamma|+\|u(t)\|_{B^s_{p,r}}+\frac{|\beta|}{3}\|u(t)\|^2_{B^s_{p,r}}+\frac{|\gamma|}{4}\|u(t)\|^3_{B^s_{p,r}}\leq H_0.
	\end{align} 
\end{lemm}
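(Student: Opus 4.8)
The plan is a continuation (bootstrap) argument driven by an a priori energy estimate in which the damping term $\lambda u$ is used decisively. Since here $p=r=2$, the space is $B^{s}_{p,r}=H^{s}(\mathbb{R})$, and I would work throughout with the $L^{2}$-based norm $\|u\|_{H^{s}}=\|\Lambda^{s}u\|_{L^{2}}$, $\Lambda^{s}=(1-\partial_{xx})^{s/2}$; this is precisely why the statement is restricted to $p=r=2$, because only an $L^{2}$ inner-product estimate keeps the damping on the favourable side of the inequality (a triangle-inequality transport estimate in a general $B^{s}_{p,r}$ would replace $-\lambda u$ by a sign-wrong $+\lambda\|u\|$). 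I would start from the local well-posedness of \eqref{u} in $H^{s}$, $s>\tfrac32$ (\cite{Igor2020jde,MengYin}): a unique solution $u\in C\big([0,T_{\max});H^{s}\big)$ exists on a maximal interval, together with the blow-up criterion that $T_{\max}<\infty$ implies $\int_{0}^{T_{\max}}\|u_{x}(\tau)\|_{L^{\infty}}\,d\tau=+\infty$. It then suffices to establish the a priori bound $H(t)\le H_{0}$ for $t\in[0,T_{\max})$, i.e.\ \eqref{small}.

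For the energy estimate I would apply $\Lambda^{s}$ to \eqref{u}, pair with $\Lambda^{s}u$ in $L^{2}$, and integrate by parts. The damping contributes $+\lambda\|u\|_{H^{s}}^{2}$ on the left. In the transport term $(u+\Gamma)u_{x}$ the constant-speed part $\Gamma u_{x}$ gives $\tfrac{\Gamma}{2}\int\partial_{x}(\Lambda^{s}u)^{2}=0$, while a Kato--Ponce commutator estimate plus one further integration by parts bounds the remaining part by $C\|u_{x}\|_{L^{\infty}}\|u\|_{H^{s}}^{2}\le C\|u\|_{H^{s}}^{3}$ (via $H^{s}\hookrightarrow W^{1,\infty}$). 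For $Q=-\Lambda^{-2}\partial_{x}\big(-h(u)+u^{2}+\tfrac12 u_{x}^{2}\big)$, using that $\Lambda^{-2}\partial_{x}$ gains a derivative and that $H^{s-1}$ is a Banach algebra ($s-1>\tfrac12$), iterated product/Moser estimates applied to the cubic $\tfrac{\beta}{3}u^{3}$ and quartic $\tfrac{\gamma}{4}u^{4}$ parts of $h(u)$ and to $u^{2},\,u_{x}^{2}$ should yield
\[
\|Q\|_{H^{s}}\le\big\|{-h(u)+u^{2}+\tfrac12 u_{x}^{2}}\big\|_{H^{s-1}}\le C\|u\|_{H^{s}}\Big(|\alpha|+|\Gamma|+\|u\|_{H^{s}}+\tfrac{|\beta|}{3}\|u\|_{H^{s}}^{2}+\tfrac{|\gamma|}{4}\|u\|_{H^{s}}^{3}\Big)=C\|u\|_{H^{s}}H(t),
\]
the coefficients $\tfrac{|\beta|}{3},\tfrac{|\gamma|}{4}$ being exactly those occurring in $h(u)$. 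Hence $(\Lambda^{s}Q,\Lambda^{s}u)\le\|Q\|_{H^{s}}\|u\|_{H^{s}}\le C\|u\|_{H^{s}}^{2}H(t)$, and since $\|u\|_{H^{s}}\le H(t)$ the transport term is likewise $\le C\|u\|_{H^{s}}^{2}H(t)$. Collecting everything and dividing by $\|u\|_{H^{s}}$ (the case $\|u\|_{H^{s}}=0$ being trivial) I expect to reach
\[
\frac{d}{dt}\|u(t)\|_{H^{s}}\le\big(C\,H(t)-\lambda\big)\|u(t)\|_{H^{s}}
\]
for some constant $C=C(s)\ge1$.

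I would then set $\epsilon:=\tfrac{1}{2C}$ and assume \eqref{ss0}. Because $X\mapsto H$ with $X=\|u\|_{H^{s}}\ge0$ is strictly increasing ($\tfrac{dH}{dX}=1+\tfrac{2|\beta|}{3}X+\tfrac{3|\gamma|}{4}X^{2}\ge1$), the inequality $H(t)\le H_{0}$ is equivalent to $\|u(t)\|_{H^{s}}\le\|u_{0}\|_{H^{s}}$. Put $T^{\ast}:=\sup\{\tau\in[0,T_{\max}):H(t)\le H_{0}\ \text{for all}\ t\in[0,\tau]\}>0$. On $[0,T^{\ast})$ one has $C H(t)-\lambda\le C H_{0}-\lambda\le\tfrac{\lambda}{2}-\lambda=-\tfrac{\lambda}{2}<0$, so the differential inequality gives $\|u(t)\|_{H^{s}}\le\|u_{0}\|_{H^{s}}e^{-\lambda t/2}$, whence $H(t)\le H_{0}$ there; and if $T^{\ast}<T_{\max}$, continuity up to $T^{\ast}$ forces $\|u(T^{\ast})\|_{H^{s}}<\|u_{0}\|_{H^{s}}$, hence $H(T^{\ast})<H_{0}$, so the interval on which $H\le H_{0}$ extends past $T^{\ast}$ --- a contradiction. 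Therefore $T^{\ast}=T_{\max}$, so $\|u_{x}(t)\|_{L^{\infty}}\le C\|u(t)\|_{H^{s}}\le C\|u_{0}\|_{H^{s}}$ stays bounded on $[0,T_{\max})$; the blow-up criterion then forces $T_{\max}=\infty$, and \eqref{small} holds for every $t\in[0,\infty)$.

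The one genuinely delicate step is the energy estimate: the damping must be made to survive with the correct sign (hence the restriction $p=r=2$), and the cubic transport contribution together with the degree $1$--$4$ nonlinearities in $Q$ must be repackaged into the single quantity $C\|u\|_{H^{s}}^{2}H(t)$; once that is done the factor $\lambda$ in \eqref{ss0} makes the continuity argument close automatically. The commutator and product estimates invoked are the standard Sobolev analogues of Propositions \ref{d'd}--\ref{fg} and pose no further difficulty.
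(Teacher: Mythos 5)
This lemma is quoted from \cite{MengYin}; the present paper contains no proof of it, so there is no internal argument to compare yours with line by line. Taken on its own, your proposal is the natural and essentially correct route: the $L^2$-pairing (Kato--Ponce commutator for $uu_x$, vanishing of the $\Gamma u_x$ contribution, the one-derivative gain of $\Lambda^{-2}\partial_x$ plus the algebra property of $H^{s-1}$ for $Q$) yields $\frac{d}{dt}\|u\|_{H^s}\leq (CH(t)-\lambda)\|u\|_{H^s}$, and the smallness condition \eqref{ss0} with $\epsilon=\frac{1}{2C}$ then closes a continuity argument, the blow-up criterion of the local theory (\cite{Igor2020jde,MengYin}) upgrading the uniform bound to global existence and \eqref{small}; this is surely the mechanism behind the cited result, which exploits the damping $\lambda u$ exactly in this way. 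One small repair: since $H(0)=H_0$ with equality, your definition $T^{\ast}=\sup\{\tau: H(t)\leq H_0 \text{ on } [0,\tau]\}$ does not immediately give $T^{\ast}>0$; it is cleaner to bootstrap on the open condition $H(t)<\lambda/C$ (which holds at $t=0$ because $CH_0\leq\lambda/2$), on which $\|u\|_{H^s}$ is nonincreasing, hence $H(t)\leq H_0<\lambda/C$, and the maximality argument then runs without circularity. With that adjustment the proof is complete.
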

	\section{Local Gevery regularity and analyticity}
\par\

	This section is devoted to establishing the local Gevery regularity and analyticity.
	Our main results can be stated as follows.
	\begin{theo}\label{local}
		Let $\sigma\geq 1,~s>\frac 3 2.$ Assume that $u_0\in G^1_{\sigma,s}.$ Then  for any $0<\delta<1,$ there exists a $T_0>0$ such that the system \eqref{u} has a unique solution $u$ which is holomorphic in $|t|<\frac{T_0(1-\delta)^{\sigma}}{2^{\sigma}-1}$ with values in $G^{\delta}_{\sigma,s}(\mathbb{R}).$ Moreover $T_0= \frac{1}{2^{2\sigma+8}C'(e^{-\sigma}\sigma^{\sigma}+2)(1+\|u_0\|_{G^{1}_{\sigma,s}})^4},$ where the positive constant $C'$ depending on $s,~\alpha,~\beta,~\gamma,~\lambda,~\Gamma.$
	\end{theo}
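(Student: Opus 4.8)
The plan is to apply the generalized Ovsyannikov theorem (Theorem \ref{L}) to the decreasing scale of Banach spaces $X_\delta:=G^\delta_{\sigma,s}(\mathbb{R})$, $0<\delta<1$, the nesting $X_\delta\hookrightarrow X_{\delta'}$ with $\|\cdot\|_{\delta'}\le\|\cdot\|_\delta$ for $\delta'<\delta$ being provided by Proposition \ref{embedding}. First I would recast \eqref{u} in the abstract form \eqref{cauchy} with the (autonomous) right-hand side
\[
F(t,u)=-(u+\Gamma)\partial_x u-\lambda u+Q(u),\qquad Q(u)=-\Lambda^{-2}\partial_x\Big(-h(u)+u^2+\tfrac12(\partial_x u)^2\Big),
\]
$h(u)=(\alpha+\Gamma)u+\tfrac\beta3 u^3+\tfrac\gamma4 u^4$, and then it remains only to verify hypotheses (1)--(3) of Theorem \ref{L} with $R\simeq 1+\|u_0\|_{G^1_{\sigma,s}}$; the existence of a unique solution holomorphic in time, together with the explicit lower bound for $T_0$, then follows from Theorem \ref{L} and Remark \ref{t}.

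Hypothesis (1) I expect to be soft: $F$ is built from $u$ by finitely many pointwise products (and $G^{\delta'}_{\sigma,s}$ is an algebra for $s>\tfrac12$ by Proposition \ref{fg}), by the linear operators $\partial_x$ and $\Lambda^{-2}\partial_x$, each of which maps $X_\delta$ boundedly into $X_{\delta'}$ for $\delta'<\delta$ by Proposition \ref{d'd}, and by polynomial composition; since $t\mapsto u(t)$ is holomorphic into $X_\delta$ and stays in the ball of radius $R$, each of these operations preserves holomorphy and keeps the polynomial terms under control, so $t\mapsto F(t,u(t))$ is holomorphic into $X_{\delta'}$.

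The main work, and the step I expect to be the chief obstacle, is hypothesis (2). For $u,v\in\overline{B(u_0,R)}\subset X_\delta$ I would split $F(u)-F(v)$ into the transport term, the linear dissipation, and the $Q$-contribution. For the transport term I would write $(u+\Gamma)\partial_x u-(v+\Gamma)\partial_x v=(u-v)\partial_x u+(v+\Gamma)\partial_x(u-v)$ and estimate in $G^{\delta'}_{\sigma,s}$ by combining the algebra estimate of Proposition \ref{fg} with $\|\partial_x w\|_{G^{\delta'}_{\sigma,s}}\le\frac{e^{-\sigma}\sigma^\sigma}{2(\delta-\delta')^\sigma}\|w\|_{G^\delta_{\sigma,s}}$ from Proposition \ref{d'd}; since $\|u\|_{G^\delta_{\sigma,s}},\|v\|_{G^\delta_{\sigma,s}}\le\|u_0\|_{G^1_{\sigma,s}}+R$, this produces exactly one factor of $(\delta-\delta')^{-\sigma}$. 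The dissipation $-\lambda(u-v)$ loses no radius. In $Q(u)-Q(v)=-\Lambda^{-2}\partial_x\big(-(h(u)-h(v))+(u^2-v^2)+\tfrac12((\partial_x u)^2-(\partial_x v)^2)\big)$ the operator $\Lambda^{-2}\partial_x$ gains a derivative and loses no radius (third estimate of Proposition \ref{d'd}): the polynomial differences $h(u)-h(v)$ and $u^2-v^2$ factor as $(u-v)$ times a polynomial in $u,v$, whose $G^{\delta'}_{\sigma,s-1}$-norm is controlled through the algebra estimates by $(1+\|u_0\|_{G^1_{\sigma,s}}+R)^3$, and $(\partial_x u)^2-(\partial_x v)^2=(\partial_x u+\partial_x v)\,\partial_x(u-v)$ is handled by measuring one factor at Sobolev level $s$ (picking up one $(\delta-\delta')^{-\sigma}$) and the other at level $s-1$ (absorbing its derivative for free, using the second estimate of Proposition \ref{fg} and $\|\partial_x w\|_{G^{\delta'}_{\sigma,s-1}}\le\|w\|_{G^\delta_{\sigma,s}}$). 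Collecting everything and inserting $R\simeq 1+\|u_0\|_{G^1_{\sigma,s}}$ gives hypothesis (2) with
\[
L\le C'(e^{-\sigma}\sigma^\sigma+2)\big(1+\|u_0\|_{G^1_{\sigma,s}}\big)^4,
\]
$C'$ depending only on $s,\alpha,\beta,\gamma,\lambda,\Gamma$, the polynomial factor being governed by the highest-degree terms $\beta u^2u_x$, $\gamma u^3u_x$. The delicate point is to route the loss of Gevrey radius through \emph{every} nonlinear term so that only a single power $(\delta-\delta')^{-\sigma}$ survives, while faithfully tracking the dependence of $L$, hence of $T_0$, on $\|u_0\|_{G^1_{\sigma,s}}$.

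Finally, hypothesis (3) I would check in the same manner for $u_0\in X_1$: the non-derivative terms cost no radius and the derivative terms lose radius only from $1$ down to the intermediate index, which yields hypothesis (3) with a constant $M$ of the same order as $L$. Theorem \ref{L} then produces a $T_0\in(0,T)$ and a unique solution holomorphic in $|t|<\frac{T_0(1-\delta)^\sigma}{2^\sigma-1}$ with values in $G^\delta_{\sigma,s}$, and the formula of Remark \ref{t}, $T_0=\min\{\tfrac{1}{2^{2\sigma+4}L},\ \tfrac{(2^\sigma-1)R}{(2^\sigma-1)2^{2\sigma+3}LR+MD_\sigma}\}$, together with the choice $R\simeq 1+\|u_0\|_{G^1_{\sigma,s}}$ that makes this minimum comparable to its first argument, converts the bound on $L$ (after absorbing the remaining numerical constants into $C'$) into the claimed explicit value of $T_0$.
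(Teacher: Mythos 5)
Your proposal is correct and follows essentially the same route as the paper: recast \eqref{u} in the abstract form, verify hypotheses (1)--(3) of the generalized Ovsyannikov theorem (Theorem \ref{L}) for $X_\delta=G^{\delta}_{\sigma,s}$ via Propositions \ref{embedding}--\ref{fg} (with the single loss $(\delta-\delta')^{-\sigma}$ coming from the derivative estimate and the $\Lambda^{-2}\partial_x$ terms costing no radius), then take $R=1+\|u_0\|_{G^{1}_{\sigma,s}}$ and read off $T_0$ from Remark \ref{t}. The only differences are cosmetic (e.g.\ you split the transport difference as $(u-v)\partial_x u+(v+\Gamma)\partial_x(u-v)$ while the paper writes it through $\tfrac12\partial_x(u^2-v^2)$), so no substantive gap.
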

\begin{proof}
	The equation \eqref{u} can be written as follows
	\begin{equation}\label{k1}
	\left\{\begin{aligned}
		&\frac{d}{dt}u=F(t,u(t)),\\
		&u(0,x)=u_0,
		\end{aligned}\right.
	\end{equation}
where $F(t,u(t))=-(u+\Gamma)u_x-\lambda u-P_x.$
Fixed a $\sigma\geq1$ and $s>\frac 3 2.$  Theorem \ref{L} entails that $\{G^{\delta}_{\sigma,s}\}_{0>\delta<1}$ is a scale of decreasing
Banach spaces. For any $0<\delta'<\delta,$  it follows from  Propositions \ref{embedding}-\ref{fg} that
\begin{align}\label{e0}
	\|F(t,u(t))\|_{G^{\delta'}_{\sigma,s}}&\leq \frac 1 2\|(u^2)_x\|_{G^{\delta'}_{\sigma,s}}+ +|\Gamma|\|u_x\|_{{G^{\delta'}_{\sigma,s}}}+C_s\Big(\|h(u)\|_{G^{\delta'}_{\sigma,s-1}}+\|u^2\|_{G^{\delta'}_{\sigma,s-1}}+\|u_x^2\|_{G^{\delta'}_{\sigma,s-1}}\Big)\notag\\
	&\leq \frac{C_se^{-\sigma}\sigma^{\sigma}}{2(\delta'-\delta)^{\sigma}}\|u^2\|_{G^{\delta}_{\sigma,s}}+\frac{C_{s,\Gamma}e^{-\sigma}\sigma^{\sigma}}{(\delta'-\delta)^{\sigma}}\|u\|_{G^{\delta}_{\sigma,s}}+\|P_x\|_{G^{\delta'}_{\sigma,s}}.
\end{align}
Using Propositions \ref{embedding}-\ref{fg} again, we have
\begin{align*}
	\|P_x\|_{G^{\delta'}_{\sigma,s}}&\leq C_s\Big(\|h(u)\|_{G^{\delta'}_{\sigma,s-1}}+\|u^2\|_{G^{\delta'}_{\sigma,s-1}}+\|u_x^2\|_{G^{\delta'}_{\sigma,s-1}}\Big)\notag\\&\leq C'\Big(\|u\|_{G^{\delta}_{\sigma,s}}+\|u\|^2_{G^{\delta}_{\sigma,s}}+\|u\|^3_{G^{\delta}_{\sigma,s}}+\|u\|^4_{G^{\delta}_{\sigma,s}}+\|u\|^5_{G^{\delta}_{\sigma,s}}\Big)\notag\\
	&\leq C'\|u\|_{G^{\delta}_{\sigma,s}}(1+\|u\|_{G^{\delta}_{\sigma,s}})^4,
\end{align*}
where $C'$ depending on $s,\alpha,\beta,\gamma,\lambda,\Gamma.$\\
Adding up the above estimate to \eqref{e0}, one can deduce that
\begin{align}\label{e1}
		\|F(t,u(t))\|_{G^{\delta'}_{\sigma,s}}&\leq
		\frac{C'(e^{-\sigma}\sigma^{\sigma}+2)}{2(\delta'-\delta)^{\sigma}}\|u\|_{G^{\delta}_{\sigma,s}}(1+\|u\|_{G^{\delta}_{\sigma,s}})^4,
\end{align}
which means that $F(t,u(t))$ satisfies the condition (1) of Theorem \ref{cauchy}. Adopting the similar procedure for  $F(u_0)$ as above, we arrive at
\begin{align*}
		\|F(u_0)\|_{G^{\delta}_{\sigma,s}}&\leq
	\frac{C'(e^{-\sigma}\sigma^{\sigma}+2)}{2(1-\delta)^{\sigma}}\|u_0\|_{G^{1}_{\sigma,s}}(1+\|u_0\|_{G^{1}_{\sigma,s}})^4.
\end{align*}
Therefore, we obtain that $F(t,u(t))$ satisfies the condition (3) in Theorem \ref{cauchy} with $$M=\frac{C'(e^{-\sigma}\sigma^{\sigma}+2)}{2}\|u_0\|_{G^{1}_{\sigma,s}}(1+\|u_0\|_{G^{1}_{\sigma,s}})^4.$$
On the other hand, we have to check that $F(t,u(t))$ satisfies the condition (2) of Theorem \ref{cauchy}. Let $\|u-u_0\|_{G^{\delta}_{\sigma,s}}\leq R$ and $\|v-v_0\|_{G^{\delta}_{\sigma,s}}\leq R.$ One can get from Theorem \ref{cauchy} and Propositions \ref{embedding}-\ref{fg} that
\begin{align}\label{e2}
	\|F(u)-F(v)\|_{G^{\delta'}_{\sigma,s}}&\leq \frac{C_se^{-\sigma}\sigma^{\sigma}}{2(\delta'-\delta)^{\sigma}}\|u^2-v^2\|_{G^{\delta}_{\sigma,s}}+\frac{C_{s,\Gamma}e^{-\sigma}\sigma^{\sigma}}{(\delta'-\delta)^{\sigma}}\|u-v\|_{G^{\delta}_{\sigma,s}}+\|P_x(u)-P_x(v)\|_{G^{\delta'}_{\sigma,s}}\notag\\&\leq \frac{C_se^{-\sigma}\sigma^{\sigma}}{2(\delta'-\delta)^{\sigma}}\|u-v\|_{G^{\delta}_{\sigma,s}}\|u+v\|_{G^{\delta}_{\sigma,s}}+\frac{C_{s,\Gamma}e^{-\sigma}\sigma^{\sigma}}{(\delta'-\delta)^{\sigma}}\|u-v\|_{G^{\delta}_{\sigma,s}}\notag\\
	&~~~+C'\|u-v\|_{G^{\delta}_{\sigma,s}}\Big(1+\|u+v\|_{G^{\delta}_{\sigma,s}}\Big)^4\notag\\
	&\leq 	\frac{C_se^{-\sigma}\sigma^{\sigma}}{2(\delta'-\delta)^{\sigma}}\|u-v\|_{G^{\delta}_{\sigma,s}}(\|u_0\|_{G^{\delta}_{\sigma,s}}+R+1)+C'\|u-v\|_{G^{\delta}_{\sigma,s}}(1+\|u_0\|_{G^{1}_{\sigma,s}}+R)^4\notag\\
	&\leq 	\frac{C'(e^{-\sigma}\sigma^{\sigma}+2)}{2(\delta'-\delta)^{\sigma}} \|u-v\|_{G^{\delta}_{\sigma,s}}(1+\|u_0\|_{G^{1}_{\sigma,s}}+R)^4.
\end{align}
Hence, we get $F$ satisfies the condition (2) of Theorem \ref{cauchy} with $$L={C'(e^{-\sigma}\sigma^{\sigma}+2)}(\|u_0\|_{G^{1}_{\sigma,s}}+R+1)^4.$$
Then, we  conclude that the  local existence result of \eqref{local} with the Gevrey regularity or analyticity, and
$$T_0={\min}\{\frac{1}{2^{2\sigma+4}L},~\frac{(2^{\sigma}-1)R}{(2^{\sigma}-1)2^{2\sigma+3}LR+MD_{\sigma}}\},$$
 with $D_{\sigma}=\frac{1}{2^{\sigma}-2+\frac{1}{2^{\sigma+1}}}.$

On the other hand, choosing $R=1+\|u_0\|_{G^{1}_{\sigma,s}},$ we infer that $L=2^4{C'(e^{-\sigma}\sigma^{\sigma}+2)}(1+\|u_0\|_{G^{1}_{\sigma,s}})^4$ and  $M\leq 2^{2\sigma+3}LR$. Moreover, we get
$$T_0=\frac{1}{2^{2\sigma+8}C'(e^{-\sigma}\sigma^{\sigma}+2)(1+\|u_0\|_{G^{1}_{\sigma,s}})^4}.$$

Thus, we finish the proof of Theorem \ref{local}.
\end{proof}
\section{Continuity of the data-to-solution map}
\par\

 In this section, we consider the  continuity of the data-to-solution map for initial data and solutions in Theorem \ref{local}. We first introduce a definition to illustrate  what means the data-to-solution map is continuous
 from $G^1_{\sigma,s}(\mathbb{R})$ into the solutions space.
\begin{defi}\label{defi continue}
	Let $\sigma\geq 1$ and $s>\frac 3 2.$ Then, the data-to-solution map  $u_0\mapsto u$ of the system  \eqref{u} is continuous if for a given initial data $u_0^{\infty}\in G^1_{\sigma,s}$ there exists a  $T=T(\|u_0\|_{G^1_{\sigma,s}},\|u_0^{\infty}\|_{G^1_{\sigma,s}})>0,$ such that for any sequence $u_0^n\in G^1_{\sigma,s}$ and $\|u_0^n-u_0^{\infty}\|_{G^1_{\sigma,s}}\to 0$ as $n\to \infty,$ the corresponding solutions $u^n$ of system \eqref{u} satisfy $\|u^n-u^{\infty}\|_{E_T}\to 0$ $n\to \infty,$ where
	$$\|u\|_{E_T}:=\sup_{|t|<\frac{T(1-\delta)^\sigma}{2^\sigma-1}}\Big(\|u(t)\|_{G^{\delta}_{\sigma,s}}(1-\delta)^{\sigma}\sqrt{1-\frac{|t|}{T(1-\delta)^{\sigma}}}\Big).$$
\end{defi}
\begin{theo}\label{theo continue}
	Given $\sigma\geq 1$ and $s>\frac 3 2.$ Let $u_0\in G^1_{\sigma,s}(\mathbb{R}).$ Then the data-to-solution map   $u_0\mapsto u$ of the system \eqref{u} is continuous from $G^1_{\sigma,s}(\mathbb{R})$
	into the solutions space.
\end{theo}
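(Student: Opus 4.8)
The plan is to mimic the fixed-point construction underlying Theorem \ref{L} but to run it simultaneously for the initial data $u_0^n$ and $u_0^\infty$, tracking the difference $w^n := u^n - u^\infty$ in the space $E_T$. First I would fix a common radius $R$ and a common lifespan $T$ depending only on $\sup_n \|u_0^n\|_{G^1_{\sigma,s}}$ and $\|u_0^\infty\|_{G^1_{\sigma,s}}$; since $\|u_0^n - u_0^\infty\|_{G^1_{\sigma,s}}\to 0$, the quantities $\|u_0^n\|_{G^1_{\sigma,s}}$ are uniformly bounded, so by Theorem \ref{local} all the solutions $u^n$ and $u^\infty$ exist and lie in $\overline{B(0,R)}$ (in each $G^\delta_{\sigma,s}$) on the same time interval $|t| < \frac{T(1-\delta)^\sigma}{2^\sigma-1}$, and hence belong to $E_T$ with $\|u^n\|_{E_T},\|u^\infty\|_{E_T}$ uniformly bounded. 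This uniformity is what lets us treat all the nonlinear estimates with a single Lipschitz constant $L$.

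Next I would write the integral equations $u^n(t) = u_0^n + \int_0^t F(\tau,u^n(\tau))\,d\tau$ and $u^\infty(t) = u_0^\infty + \int_0^t F(\tau,u^\infty(\tau))\,d\tau$, subtract, and estimate $w^n(t)$ in $G^{\delta}_{\sigma,s}$ at a "decreased" radius against $G^{\delta(\tau)}_{\sigma,s}$-norms at a radius $\delta(\tau) > \delta$ chosen as in Lemma \ref{Ea}. The Lipschitz bound \eqref{e2} from the proof of Theorem \ref{local}, applied with $u=u^n$, $v=u^\infty$, gives
\begin{align*}
\|w^n(t)\|_{G^\delta_{\sigma,s}} \leq \|u_0^n - u_0^\infty\|_{G^1_{\sigma,s}} + \int_0^{|t|} \frac{L}{(\delta(\tau)-\delta)^\sigma}\,\|w^n(\tau)\|_{G^{\delta(\tau)}_{\sigma,s}}\,d\tau.
\end{align*}
Multiplying by the weight $(1-\delta)^\sigma\sqrt{1 - |t|/(T(1-\delta)^\sigma)}$, taking the supremum over $|t|<\frac{T(1-\delta)^\sigma}{2^\sigma-1}$ and $0<\delta<1$, and invoking Lemma \ref{Ea} to control the time integral by $\frac{a 2^{2\delta+3}\|w^n\|_{E_a}}{(1-\delta)^\sigma}\sqrt{\cdots}$ yields an inequality of the form $\|w^n\|_{E_T} \leq C\|u_0^n - u_0^\infty\|_{G^1_{\sigma,s}} + \Theta(T)\,\|w^n\|_{E_T}$, where $\Theta(T)\to 0$ as $T\to 0$. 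Shrinking $T$ once and for all so that $\Theta(T)\leq \tfrac12$, we absorb the last term and conclude $\|w^n\|_{E_T} \leq 2C\|u_0^n - u_0^\infty\|_{G^1_{\sigma,s}} \to 0$, which is exactly the assertion of Definition \ref{defi continue}.

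The main obstacle is bookkeeping rather than a genuinely new idea: one must verify that the auxiliary radius function $\delta(\tau)$ from Lemma \ref{Ea} stays in $(0,1)$ and dominates $\delta$ uniformly in $n$, and that the uniform bound $\|u^n\|_{E_T}\leq R'$ actually holds — this requires noting that the lower bound on the lifespan in Remark \ref{t} and Theorem \ref{local} depends on the data only through $\|u_0^n\|_{G^1_{\sigma,s}}$, which is uniformly bounded, so a single $T$ works for every $n$ and for the limit. A secondary subtlety is that the Lipschitz estimate \eqref{e2} was stated for $u,v\in\overline{B(u_0,R)}$; here I would instead recenter at $0$ (equivalently enlarge $R$ to $R+\sup_n\|u_0^n\|_{G^1_{\sigma,s}}$) so that $u^n$ and $u^\infty$ lie in a common ball, at the cost of a harmless change in the constant $C'$. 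Once these uniformities are in place, the contraction argument closes as above.
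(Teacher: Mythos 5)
Your proposal is correct and follows essentially the same route as the paper: a common existence time $T$ uniform in $n$ (possible since $\|u_0^n\|_{G^1_{\sigma,s}}$ is uniformly bounded), subtraction of the integral equations, the Lipschitz bound \eqref{e2} evaluated at the auxiliary radius $\delta(\tau)$, Lemma \ref{Ea} to control the time integral, and absorption of $\tfrac12\|u^n-u^\infty\|_{E_T}$ after arranging $LT2^{2\sigma+3}<\tfrac12$, yielding $\|u^n-u^\infty\|_{E_T}\leq 2\|u_0^n-u_0^\infty\|_{G^1_{\sigma,s}}\to 0$. Your recentering remark is only a bookkeeping variant of the paper's direct use of the ball $\overline{B(u_0,R)}$, so no substantive difference remains.
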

\begin{proof}
	In general, assume that $t\geq 0$. Defining
	$$T^{\infty}=\frac{1}{2^{2\sigma+8}C'(e^{-\sigma}\sigma^{\sigma}+2)(1+\|u^{\infty}_0\|_{G^{1}_{\sigma,s}})^4},~T^{n}=\frac{1}{2^{2\sigma+8}C'(e^{-\sigma}\sigma^{\sigma}+2)(1+\|u^n_0\|_{G^{1}_{\sigma,s}})^4}.$$
	Owing to $\|u_0^n-u_0^{\infty}\|_{G^1_{\sigma,s}}\to 0$ as $n\to \infty,$  there exists a  constant $N$ such that if $n\geq N$, we obtain
	\begin{align}\label{c0}
		\|u_0^n\|_{G^1_{\sigma,s}}\leq \|u_0^{\infty}\|_{G^1_{\sigma,s}}+1.
	\end{align}
Set
\begin{align}\label{c1}
	T=\frac{1}{2^{2\sigma+8}C'(e^{-\sigma}\sigma^{\sigma}+2)(2+\|u^{n}_0\|_{G^{1}_{\sigma,s}})^4}.
\end{align}
This means that $T\leq\min\{T^n,~T^{\infty}\}$ for any $n\geq N.$ A similar argument based on the proof of Theorem \ref{local}, we claim that $T^n$ and $T^{\infty}$ are the existence time of the solutions $u^n$ and $u^{\infty}$ corresponding to $u_0^n$  and $u_0^{\infty}$
respectively.Therefore, we see that for any $n\geq N,$
\begin{align}
	u^{\infty}(t,x)&=u_0^{\infty}+\int_0^tF(\tau,	u^{\infty}(t,\tau))d\tau,~0\leq t< \frac{T(1-\delta)^\sigma}{2^\sigma-1}, \\
	u^{n}(t,x)&=u_0^{n}~+\int_0^tF(\tau,	u^{n}(t,\tau))d\tau,~~0\leq t< \frac{T(1-\delta)^\sigma}{2^\sigma-1},
\end{align}
with $F$ is given in Theorem \ref{local}.Thus, for any $0\leq t\leq \frac{T(1-\delta)^\sigma}{2^\sigma-1}$ and $0\leq \delta \leq 1,$ we can obtain
\begin{align}\label{c3}
	\|u^{n}(t)-u^{\infty}(t)\|_{\delta}\leq \|u_0^{n}(t)-u_0^{\infty}(t)\|_{\delta}+\int_0^t\|F(u^n(\tau))-F(u^{\infty}(\tau))\|_{\delta}d\tau.
\end{align}
Define that
$$\delta(t)=\frac 1 2 (1+\delta)+(\frac 1 2)^{2+\frac{1}{\sigma}}\Big([(1-\delta)^{\sigma}-\frac t T]^{\frac{1}{\sigma}}-[(1-\delta)^{\sigma}+(2^{\sigma+1}-1)\frac t T]^{\frac{1}{\sigma}}\Big).$$
Making use of Lemma \ref{Ea}, we know that $\delta<\delta(t)<1.$ One can get from \eqref{c3} that
\begin{align}\label{c4}
	\|F(u^n(\tau))-F(u^{\infty}(\tau))\|_{\delta}\leq \frac{L\|u^{n}(\tau)-u^{\infty}(\tau)\|_{\delta(\tau)}}{(\delta(\tau)-\delta)^{\sigma}},
\end{align}
where $L=2^4{C'(e^{-\sigma}\sigma^{\sigma}+2)}(1+\|u_0\|_{1})^4.$ Hence, we can deduce that
\begin{align}\label{c5}
	\|u^{n}(t)-u^{\infty}(t)\|_{\delta}\leq \|u_0^{n}(t)-u_0^{\infty}(t)\|_{\delta}+L\int_0^t\frac{\|u^{n}(\tau)-u^{\infty}(\tau)\|_{\delta(\tau)}}{(\delta(\tau)-\delta)^{\sigma}}d\tau.
\end{align}
It follows from Lemma \ref{Ea} that
\begin{align}\label{c6}
		\|u^{n}(t)-u^{\infty}(t)\|_{\delta}\leq \|u_0^{n}(t)-u_0^{\infty}(t)\|_{\delta}+\frac{T2^{2\delta+3}	\|u^{n}(t)-u^{\infty}(t)\|_{E_T}}{(1-\delta)^\sigma}\sqrt{\frac{T(1-\delta)^{\sigma}}{T(1-\delta)^{\sigma}-t}}.
\end{align}
Combining \eqref{c1} and $LT2^{{2\sigma+3}}<\frac 1 2$. Therefore,  we obtain
\begin{align*}
	\|u^{n}(t)-u^{\infty}(t)\|_{\delta}\leq \|u_0^{n}(t)-u_0^{\infty}(t)\|_{\delta}+\frac{	\|u^{n}(t)-u^{\infty}(t)\|_{E_T}}{2(1-\delta)^\sigma}\sqrt{\frac{T(1-\delta)^{\sigma}}{T(1-\delta)^{\sigma}-t}},
\end{align*}
which implies
\begin{align*}
	&\|u^{n}(t)-u^{\infty}(t)\|_{E_T}(1-\delta)^{\sigma}\sqrt{1-\frac{t}{T(1-\delta)^{\sigma}}}\notag\\
	&\leq \|u_0^{n}-u_0^{\infty}\|_{\delta}(1-\delta)^{\sigma}\sqrt{1-\frac{t}{T(1-\delta)^{\sigma}}}+\frac 1 2 \|u^{n}(t)-u^{\infty}(t)\|_{E_T}\notag\\
	&\leq \|u_0^{n}-u_0^{\infty}\|_{1}+\frac 1 2 \|u^{n}(t)-u^{\infty}(t)\|_{E_T}.
\end{align*}
In view of the right hand side of the above estimate  is independent of $t$ and $\delta.$ By taking the supermum over $0<\delta<1,~0<t< \frac{T(1-\delta)^\sigma}{2^\sigma-1},$ we see that
\begin{align*}
	\|u^{n}(t)-u^{\infty}(t)\|_{E_T}\leq \|u_0^{n}-u_0^{\infty}\|_{1}+\frac 1 2 	\|u^{n}(t)-u^{\infty}(t)\|_{E_T},
\end{align*}
which means that
\begin{align*}
	\|u^{n}(t)-u^{\infty}(t)\|_{E_T}\leq 2	\|u^{n}(t)-u^{\infty}(t)\|_{E_T}.
\end{align*}
This verifies the above inequality is valid for any $n\geq N.$
\end{proof}
	\section{Global Gevery regularity and analyticity}
	\par\
	
	In this section, we focus on the global Gevery regularity of solution to the system \eqref{u}.
	Before stating the main result of this section, we introduce some lemmas.
	\begin{lemm}\cite{hehuij}\label{polo}
		Let $\delta\geq 0,~\sigma\geq 1$ and $s>1.$ Then, for any $\xi,\eta\in\mathbb{R}$, we have
		\begin{align*}
			&|(1+\xi^2)^{\frac s 2}e^{\delta(1+\xi^2)^{\frac{1}{2\sigma}}}-(1+\eta^2)^{\frac s 2}e^{\delta(1+\eta^2)^{\frac{1}{2\sigma}}}|\notag\\
			&\leq C_s|\xi-\eta|\Big((1+|\xi-\eta|^2)^{\frac{s-1}{2}}+(1+|\eta|^2)^{\frac{s-1}{2}}\notag\\&~~~+\delta[(1+|\xi-\eta|^2)^{\frac{s-1}{2} +\frac{1}{2\sigma}}+(1+|\eta|^2)^{\frac{s-1}{2} +\frac{1}{2\sigma}}]e^{\delta(1+|\xi-\eta|^2)^{\frac{1}{2\sigma}}}\Big).
		\end{align*}
	\end{lemm}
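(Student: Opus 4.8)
The plan is to prove the pointwise inequality for the symbol
$m_{\delta,s}(\xi):=(1+\xi^2)^{s/2}e^{\delta(1+\xi^2)^{1/(2\sigma)}}$
by writing the difference $m_{\delta,s}(\xi)-m_{\delta,s}(\eta)$ as an integral of its derivative along the segment joining $\eta$ to $\xi$. Concretely, set $\phi(r)=(1+r^2)^{s/2}e^{\delta(1+r^2)^{1/(2\sigma)}}$ as a function of a real variable $r$, and use
\begin{align*}
m_{\delta,s}(\xi)-m_{\delta,s}(\eta)=\int_0^1 \frac{d}{d\theta}\phi\bigl(\eta+\theta(\xi-\eta)\bigr)\,d\theta=(\xi-\eta)\int_0^1 \phi'\bigl(\eta+\theta(\xi-\eta)\bigr)\,d\theta,
\end{align*}
which already extracts the factor $|\xi-\eta|$. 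The task then reduces to bounding $|\phi'(r)|$ for $r$ on the segment, uniformly in $\theta$, by the two-term (polynomial weight) plus $\delta$-term (Gevrey weight) structure on the right-hand side.

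Next I would compute $\phi'(r)$ explicitly:
\begin{align*}
\phi'(r)=\Bigl(s(1+r^2)^{\frac{s}{2}-1}+\tfrac{\delta}{\sigma}(1+r^2)^{\frac{s}{2}-1+\frac{1}{2\sigma}}\Bigr)\,r\,e^{\delta(1+r^2)^{1/(2\sigma)}}.
\end{align*}
Since $|r|\le 1+r^2$ and $|r|\le |\xi-\eta|+|\eta|$ on the segment, the algebraic prefactors are controlled by $(1+r^2)^{(s-1)/2}$ and $(1+r^2)^{(s-1)/2+1/(2\sigma)}$ up to constants depending only on $s$ and $\sigma$ (absorbed into $C_s$; note $\sigma\ge 1$ keeps $1/\sigma\le 1$). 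The remaining point is the standard "Peetre-type" splitting: for a point $r$ between $\eta$ and $\xi$ one has $(1+r^2)^{1/2}\le (1+|\xi-\eta|^2)^{1/2}+(1+|\eta|^2)^{1/2}$ (triangle inequality after $r=\eta+\theta(\xi-\eta)$), hence $(1+r^2)^{a}\le C_a\bigl((1+|\xi-\eta|^2)^{a}+(1+|\eta|^2)^{a}\bigr)$ for $a\ge 0$, and likewise $(1+r^2)^{1/(2\sigma)}\le (1+|\xi-\eta|^2)^{1/(2\sigma)}+(1+|\eta|^2)^{1/(2\sigma)}$, which after exponentiation gives the factor $e^{\delta(1+|\xi-\eta|^2)^{1/(2\sigma)}}e^{\delta(1+|\eta|^2)^{1/(2\sigma)}}$. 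Distributing this product over the polynomial split and keeping the term in which the variable-$\eta$ weight is converted back to $(1+|\eta|^2)^{(s-1)/2+1/(2\sigma)}e^{\delta(1+|\eta|^2)^{1/(2\sigma)}}$ — which is itself of the allowed form — and leaving the $\xi-\eta$ weight with the surviving exponential $e^{\delta(1+|\xi-\eta|^2)^{1/(2\sigma)}}$ reproduces exactly the claimed right-hand side. The purely polynomial first two terms come from the $s(1+r^2)^{s/2-1}$ piece combined with bounding one of the exponentials by the other so that only $e^{\delta(1+|\xi-\eta|^2)^{1/(2\sigma)}}$ remains, and in the regime where both arguments are comparable the exponential can be absorbed, giving the $\delta$-free terms; a cleaner route is to treat $\delta$ small versus large separately, but the integral-of-derivative estimate already yields a bound dominated by the stated one.

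The main obstacle is the bookkeeping around the exponential factor: unlike the polynomial weights, $e^{\delta(\cdot)}$ does not split additively, only sub-multiplicatively through the sublinear exponent $1/(2\sigma)\le 1/2$, so one must be careful to (i) use concavity/subadditivity of $r\mapsto r^{1/(2\sigma)}$ to get $(1+r^2)^{1/(2\sigma)}\le (1+|\xi-\eta|^2)^{1/(2\sigma)}+(1+|\eta|^2)^{1/(2\sigma)}$, and (ii) decide which of the two resulting exponential factors to keep attached to which polynomial weight so that the final expression matches the asymmetric form in the statement (exponential $e^{\delta(1+|\xi-\eta|^2)^{1/(2\sigma)}}$ multiplying the whole bracket, with $e^{\delta(1+|\eta|^2)^{1/(2\sigma)}}$ silently absorbed where a $\frac{1}{2\sigma}$-weight in $\eta$ appears). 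Handling the first two, $\delta$-independent, terms requires noticing that when $\delta$ is bounded the exponential at $\eta$ contributes a constant, and when it is not, those terms are dominated by the third; this case split is the only genuinely delicate point, everything else being elementary calculus and the triangle inequality.
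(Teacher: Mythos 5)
There is nothing to compare against in the paper itself: Lemma \ref{polo} is quoted from \cite{hehuij} without proof, so your attempt must stand on its own, and it has a genuine gap exactly at the step you yourself flag as ``the only genuinely delicate point''. Your mean-value computation along the segment is correct as far as it goes, but what it honestly yields is a bound in which \emph{every} term --- including the two $\delta$-free polynomial terms --- carries the product $e^{\delta(1+|\xi-\eta|^2)^{\frac{1}{2\sigma}}}e^{\delta(1+|\eta|^2)^{\frac{1}{2\sigma}}}$. The subsequent removal of the factor $e^{\delta(1+|\eta|^2)^{\frac{1}{2\sigma}}}$ is not legitimate: it cannot be ``silently absorbed'' into the polynomial weight $(1+|\eta|^2)^{\frac{s-1}{2}+\frac{1}{2\sigma}}$, it is not a constant when $\delta$ is bounded (for fixed $\delta=1$ it is unbounded in $\eta$), and it is not dominated by $e^{\delta(1+|\xi-\eta|^2)^{\frac{1}{2\sigma}}}$ (take $\xi$ close to $\eta$ with $|\eta|$ large). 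The same objection applies to dropping the exponentials from the two $\delta$-free terms.

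In fact no argument can close this gap, because the inequality as printed is false: take $\sigma=\delta=1$, $\xi=\eta+1$, $\eta\to+\infty$; the left-hand side is comparable to $(1+\eta^2)^{\frac s2}e^{\sqrt{1+\eta^2}}$, while the right-hand side is polynomial in $\eta$, since the only exponential it contains, $e^{\delta(1+|\xi-\eta|^2)^{\frac{1}{2\sigma}}}=e^{\sqrt2}$, is a fixed constant. The statement is a misquotation of the lemma in \cite{hehuij}, whose right-hand side carries an additional factor $e^{\delta(1+|\eta|^2)^{\frac{1}{2\sigma}}}$. With that factor restored, your computation can be salvaged (e.g.\ use $e^{x}\leq 1+xe^{x}$ to convert the symmetric exponential bound into the asymmetric one), but the cleaner route is the two-term splitting
\begin{align*}
m(\xi)-m(\eta)=\bigl[(1+\xi^2)^{\frac s2}-(1+\eta^2)^{\frac s2}\bigr]e^{\delta(1+\eta^2)^{\frac{1}{2\sigma}}}
+(1+\xi^2)^{\frac s2}\bigl[e^{\delta(1+\xi^2)^{\frac{1}{2\sigma}}}-e^{\delta(1+\eta^2)^{\frac{1}{2\sigma}}}\bigr],
\end{align*}
estimating the first bracket by the classical inequality $|(1+\xi^2)^{\frac s2}-(1+\eta^2)^{\frac s2}|\leq C_s|\xi-\eta|\bigl[(1+|\xi-\eta|^2)^{\frac{s-1}{2}}+(1+|\eta|^2)^{\frac{s-1}{2}}\bigr]$ (this produces the $\delta$-free terms with no spurious $e^{\delta(1+|\xi-\eta|^2)^{\frac{1}{2\sigma}}}$, which your single mean-value pass cannot do), and the second via $|e^{x}-1|\leq|x|e^{|x|}$ together with $|(1+\xi^2)^{\frac{1}{2\sigma}}-(1+\eta^2)^{\frac{1}{2\sigma}}|\leq\min\bigl\{\tfrac1\sigma|\xi-\eta|(1+\xi^2)^{\frac{1}{2\sigma}-\frac12},\,(1+|\xi-\eta|^2)^{\frac{1}{2\sigma}}\bigr\}$, which is what generates the factor $\delta$, the exponents $\tfrac{s-1}{2}+\tfrac{1}{2\sigma}$, and the single exponential $e^{\delta(1+|\xi-\eta|^2)^{\frac{1}{2\sigma}}}$ in the $\delta$-term.
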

	\begin{lemm}\cite{hehuij}\label{trans}
		Let $\delta\geq 0,~\sigma\geq 1$ and $s>1+\frac d 2.$ Suppose that $u\in G^{\delta}_{\sigma,\frac{1}{2\sigma}}(\mathbb{R}),~v\in G^{\delta}_{\sigma,\frac{1}{2\sigma}}(\mathbb{R}).$ Then, we have
		\begin{align*}
			|\langle \Lambda^{s}e^{\delta{\Lambda}^{\frac{1}{\sigma}}}(u\cdot v),\Lambda^{s}e^{\delta{\Lambda}^{\frac{1}{\sigma}}}v\rangle|&\leq C\|\Lambda^{s}u\| \|\Lambda^{s}v\|^2+C\delta\Big(\|\Lambda^{s}e^{\delta{\Lambda}^{\frac{1}{\sigma}}}u\| \|\Lambda^{s+\frac{1}{\sigma}}e^{\delta{\Lambda}^{\frac{1}{\sigma}}}v\|^2\notag\\&~~~+\|\Lambda^{s+\frac{1}{\sigma}}e^{\delta{\Lambda}^{\frac{1}{\sigma}}}u\| \|\Lambda^{s+\frac{1}{\sigma}}e^{\delta{\Lambda}^{\frac{1}{\sigma}}}v\|\|\Lambda^{s}e^{\delta{\Lambda}^{\frac{1}{\sigma}}}v\|\Big),
		\end{align*}
	where $\langle\cdot,\cdot\rangle$ represents the scalar product of $L^2(\mathbb{R}^d)$, and $\|\cdot\|: =\|\cdot\|_{L^2}$, $\Lambda^s$ denotes for the inverse Fourier transform of $(1+|\xi|^2)^{\frac{s}{2}}\hat{f}.$
	\end{lemm}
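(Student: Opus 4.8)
The natural route is a Fourier--side commutator estimate adapted to the Gevrey weight, in the spirit of the Kato--Ponce inequality. Write $\langle\xi\rangle:=(1+|\xi|^{2})^{1/2}$ and let $m_{\delta}(\xi):=\langle\xi\rangle^{s}e^{\delta\langle\xi\rangle^{1/\sigma}}$ be the Fourier symbol of $\Lambda^{s}e^{\delta\Lambda^{1/\sigma}}$, so that by Plancherel's theorem (and since $m_{\delta}$ is real and even), up to the fixed normalization constant,
$$\langle\Lambda^{s}e^{\delta\Lambda^{1/\sigma}}(uv),\Lambda^{s}e^{\delta\Lambda^{1/\sigma}}v\rangle=\int_{\mathbb{R}^{d}}\int_{\mathbb{R}^{d}}m_{\delta}(\xi)^{2}\,\widehat{u}(\xi-\eta)\,\widehat{v}(\eta)\,\overline{\widehat{v}(\xi)}\,d\eta\,d\xi .$$
The plan is to split $m_{\delta}(\xi)^{2}=m_{\delta}(\xi)m_{\delta}(\eta)+m_{\delta}(\xi)\big(m_{\delta}(\xi)-m_{\delta}(\eta)\big)$, which is the Fourier form of the decomposition $\Lambda^{s}e^{\delta\Lambda^{1/\sigma}}(uv)=u\,\Lambda^{s}e^{\delta\Lambda^{1/\sigma}}v+\big[\Lambda^{s}e^{\delta\Lambda^{1/\sigma}},u\big]v$.

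For the first, non-commutator piece, performing the $\eta$-convolution turns the double integral into $\int_{\mathbb{R}^{d}}u\,\big|\Lambda^{s}e^{\delta\Lambda^{1/\sigma}}v\big|^{2}\,dx$, which is at most $\|u\|_{L^{\infty}}\,\|\Lambda^{s}e^{\delta\Lambda^{1/\sigma}}v\|^{2}$; since $s>1+\tfrac d2>\tfrac d2$, the embedding $\|u\|_{L^{\infty}}\le C\|\Lambda^{s}u\|$ accounts for the first term on the right-hand side. The commutator piece $\int\!\int m_{\delta}(\xi)\big(m_{\delta}(\xi)-m_{\delta}(\eta)\big)\widehat{u}(\xi-\eta)\widehat{v}(\eta)\overline{\widehat{v}(\xi)}\,d\eta\,d\xi$ is the substantial part: here I would insert Lemma~\ref{polo} to bound $|m_{\delta}(\xi)-m_{\delta}(\eta)|$. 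That lemma supplies one factor $|\xi-\eta|$ (a derivative falling on $u$) and separates the symbol into a $\delta$-free part controlled by $\langle\xi-\eta\rangle^{s-1}+\langle\eta\rangle^{s-1}$ and a $\delta$-part controlled by $\delta\big(\langle\xi-\eta\rangle^{s-1+1/\sigma}+\langle\eta\rangle^{s-1+1/\sigma}\big)e^{\delta\langle\xi-\eta\rangle^{1/\sigma}}$.

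To close, I would distribute the leftover weight $e^{\delta\langle\xi\rangle^{1/\sigma}}$ coming from $m_{\delta}(\xi)$ by the elementary subadditivity $\langle\xi\rangle^{1/\sigma}\le\langle\xi-\eta\rangle^{1/\sigma}+\langle\eta\rangle^{1/\sigma}$, which holds precisely because $\sigma\ge1$, so that the exponentials can be attached to the three frequencies $\xi-\eta$, $\eta$, $\xi$ separately. Each resulting trilinear integral is then handled by Cauchy--Schwarz in $\xi$ followed by Young's convolution inequality, placing the lowest-order slot in $L^{1}$ and the other two in $L^{2}$; the hypothesis $s>1+\tfrac d2$ enters exactly through $\langle\cdot\rangle^{1-s}\in L^{2}(\mathbb{R}^{d})$, which converts an $L^{1}$ bound on a once-differentiated factor into an $\|\Lambda^{s}\cdot\|$-type norm. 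After bounding the weaker norms by the stronger ones, the $\delta$-contributions collapse to the two expressions $\|\Lambda^{s}e^{\delta\Lambda^{1/\sigma}}u\|\,\|\Lambda^{s+1/\sigma}e^{\delta\Lambda^{1/\sigma}}v\|^{2}$ and $\|\Lambda^{s+1/\sigma}e^{\delta\Lambda^{1/\sigma}}u\|\,\|\Lambda^{s+1/\sigma}e^{\delta\Lambda^{1/\sigma}}v\|\,\|\Lambda^{s}e^{\delta\Lambda^{1/\sigma}}v\|$, as claimed.

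The main obstacle is the bookkeeping in this last step. Lemma~\ref{polo} produces four sub-terms, and for each one must choose how to split the total $\langle\cdot\rangle$-order among the three slots, which slot to send into $L^{1}$ (the $u$-slot in the high--low regime, the low-frequency $v$-slot in the low--high regime), and how to route the exponential factors consistently via subadditivity, while never demanding more than $s+\tfrac1\sigma$ derivatives on either function and never doubling the coefficient $\delta$ in the exponent. Matching the $\tfrac1\sigma$-losses to the $\Lambda^{s+1/\sigma}e^{\delta\Lambda^{1/\sigma}}$ norms rather than letting them accumulate is the only genuinely delicate point; the remaining ingredients are Plancherel, the Sobolev embedding and Young's inequality.
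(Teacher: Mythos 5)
First, a remark on the comparison baseline: the paper does not prove this lemma at all --- it is quoted verbatim from He--Yin \cite{hehuij} --- so your sketch can only be judged on its own merits. Its skeleton (Plancherel, a Fourier-side symbol decomposition, Lemma \ref{polo}, subadditivity of $\langle\xi\rangle^{1/\sigma}$ for $\sigma\ge 1$, Cauchy--Schwarz and Young) is the right family of argument, but as organized it does not close, and the failure occurs exactly at the point you yourself flag as the delicate one.

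The gap is that your bookkeeping produces Gevrey-weighted terms \emph{without} the prefactor $\delta$, which the claimed inequality does not allow. Concretely: after your split $m_\delta(\xi)^2=m_\delta(\xi)m_\delta(\eta)+m_\delta(\xi)\bigl(m_\delta(\xi)-m_\delta(\eta)\bigr)$, the non-commutator piece is $\int u\,|\Lambda^{s}e^{\delta\Lambda^{1/\sigma}}v|^{2}\,dx\le \|u\|_{L^\infty}\|\Lambda^{s}e^{\delta\Lambda^{1/\sigma}}v\|^{2}$, and you assert that Sobolev embedding makes this ``account for the first term'' $C\|\Lambda^{s}u\|\,\|\Lambda^{s}v\|^{2}$. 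That is false: the $v$-factor is the weighted norm, which is not controlled by $\|\Lambda^{s}v\|$, and there is no factor $\delta$ in front to place it among the $\delta$-terms --- yet the presence of $\delta$ on every weighted term is the entire point of the lemma (it is what allows absorption via the choice of $\dot{\delta}(t)$ in Theorem \ref{global gevery}). The missing step is the peeling $e^{2\delta\langle\xi\rangle^{1/\sigma}}\le 1+2\delta\langle\xi\rangle^{1/\sigma}e^{2\delta\langle\xi\rangle^{1/\sigma}}$ (Lemma \ref{inter} in disguise), which turns $\|\Lambda^{s}e^{\delta\Lambda^{1/\sigma}}v\|^{2}$ into $\|\Lambda^{s}v\|^{2}+2\delta\|\Lambda^{s+\frac{1}{2\sigma}}e^{\delta\Lambda^{1/\sigma}}v\|^{2}$ and only then yields the first two terms of the stated bound. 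The same defect recurs in your commutator piece: you multiply the $\delta$-free part of Lemma \ref{polo} by the leftover $e^{\delta\langle\xi\rangle^{1/\sigma}}$ from $m_\delta(\xi)$ and distribute it by subadditivity, so the resulting trilinear integrals are fully weighted but carry no $\delta$; they match neither the unweighted first term nor the $\delta$-terms, and your plan only accounts for ``the $\delta$-contributions.'' The repair is to peel the unweighted part of the outer symbol \emph{before} distributing exponentials: split off the pairing with $1$ (equivalently $\langle\Lambda^{s}(uv),\Lambda^{s}v\rangle$, bounded by $C\|\Lambda^{s}u\|\|\Lambda^{s}v\|^{2}$ using the algebra property for $s>\tfrac d2$, or a standard Kato--Ponce/Young argument with the $\delta$-free part of \ref{polo}), and apply the subadditivity-and-Young machinery only to the remainder $m_\delta(\xi)^{2}-\langle\xi\rangle^{2s}\le 2\delta\langle\xi\rangle^{2s+\frac1\sigma}e^{2\delta\langle\xi\rangle^{1/\sigma}}$, which automatically carries the factor $\delta$. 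With that reorganization the estimate closes (and one can in fact dispense with Lemma \ref{polo} entirely); as written, your argument does not.
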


\begin{lemm}\cite{hehuij}\label{inter}
For all $\delta\geq 0,~\sigma\geq1,~s\in\mathbb{R}$ and $l>0.$ Let $u\in G^{\delta}_{\sigma,\frac{1}{2\sigma}}(\mathbb{R}).$ Then we have the following estimate
\begin{align*}
	\|u\|_{G^{\delta}_{\sigma,s}}\leq\sqrt{e}\|u\|_{H^s}+(2\delta)^{\frac l 2}\|u\|_{G^{\delta}_{\sigma,s+\frac{l}{2\sigma}}}.
\end{align*}
\end{lemm}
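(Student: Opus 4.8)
The plan is to reduce the claimed inequality to an elementary scalar estimate on the Fourier side. First I would invoke $\sqrt{a+b}\le\sqrt a+\sqrt b$ for $a,b\ge 0$, which shows it suffices to prove the squared inequality
$$\|u\|_{G^{\delta}_{\sigma,s}}^{2}\le e\,\|u\|_{H^{s}}^{2}+(2\delta)^{l}\,\|u\|_{G^{\delta}_{\sigma,s+\frac{l}{2\sigma}}}^{2}.$$
Writing each norm through Definition \ref{defi gevery} and Plancherel, and abbreviating $\langle\xi\rangle=(1+|\xi|^{2})^{1/2}$ so that $(1+|\xi|^{2})^{1/(2\sigma)}=\langle\xi\rangle^{1/\sigma}$, all three quantities are integrals of $|\hat u(\xi)|^{2}$ against nonnegative weights. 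Hence it is enough to establish the pointwise bound
$$\langle\xi\rangle^{2s}e^{2\delta\langle\xi\rangle^{1/\sigma}}\le e\,\langle\xi\rangle^{2s}+(2\delta)^{l}\langle\xi\rangle^{2s+\frac{l}{\sigma}}e^{2\delta\langle\xi\rangle^{1/\sigma}}\qquad(\xi\in\mathbb{R})$$
and then integrate against $|\hat u(\xi)|^{2}\,d\xi$.

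Dividing this pointwise inequality by $\langle\xi\rangle^{2s}>0$ and setting $y=2\delta\langle\xi\rangle^{1/\sigma}\ge 0$ (so that $(2\delta)^{l}\langle\xi\rangle^{l/\sigma}=y^{l}$), it reduces to the scalar inequality $e^{y}\le e+y^{l}e^{y}$ for all $y\ge 0$ and $l>0$. I would prove this by splitting at $y=1$: if $0\le y\le 1$ then $e^{y}\le e\le e+y^{l}e^{y}$; if $y>1$ then $y^{l}>1$ (this is where $l>0$ is used), so $y^{l}e^{y}>e^{y}$ and therefore $e^{y}\le y^{l}e^{y}\le e+y^{l}e^{y}$. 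The case $\delta=0$ is covered automatically since then $y\equiv 0$. Substituting $y=2\delta\langle\xi\rangle^{1/\sigma}$ back recovers the pointwise bound, and integrating and taking square roots yields the lemma.

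There is no substantial obstacle in this argument; it is essentially a one-line Fourier-multiplier comparison. The only places that require a little care are checking that the passage from the norm inequality to a pointwise inequality in the frequency variable is legitimate --- which is immediate from the Plancherel formulation and the nonnegativity of the weights $\langle\xi\rangle^{2s}$ and $\langle\xi\rangle^{2s+l/\sigma}e^{2\delta\langle\xi\rangle^{1/\sigma}}$ --- and organizing the case split so that the constant $e^{1}=e$ (hence $\sqrt e$ after taking square roots) appears; note also that this $\sqrt e$ is not optimal but is entirely adequate for the applications to global Gevrey regularity in this section.
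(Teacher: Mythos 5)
Your proof is correct: the reduction to the pointwise bound on the Fourier side and the scalar inequality $e^{y}\le e+y^{l}e^{y}$ (split at $y=2\delta\langle\xi\rangle^{1/\sigma}=1$) is exactly the frequency-region splitting used in the cited reference \cite{hehuij}, which this paper quotes without reproving. The only cosmetic remark is that the hypothesis $u\in G^{\delta}_{\sigma,\frac{1}{2\sigma}}$ in the statement should really be read as finiteness of the right-hand side norm $\|u\|_{G^{\delta}_{\sigma,s+\frac{l}{2\sigma}}}$, under which your integration of the pointwise bound is fully justified.
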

We can now state the main result of this section.
\begin{theo}\label{global gevery}
Given $s>\frac 3 2,~\sigma\geq 1.$ Let $u_0\in G_{\sigma}(\mathbb{R}).$ Assume that $|\alpha|+\Gamma|+\|u_0\|_{H^s}+\frac{|\beta|}{3}\|u_0\|^2_{H^s}+\frac{|\gamma|}{4}\|u_0\|^3_{H^s}\leq \lambda \epsilon.$ Then there exists a unique global solution of system \eqref{u} in Gevery class $\sigma,$  namely, for any $t\geq 0,~u(t,\cdot)$ is of Gevery class $\sigma.$
\end{theo}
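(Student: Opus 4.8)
The plan is to propagate the finite-$H^s$-norm global bound from Lemma~\ref{hsglobal} up to the Gevrey scale by deriving a differential inequality for a Gevrey norm with a \emph{time-dependent} radius $\delta(t)$ that is forced to decay, but only to a strictly positive limit, so that Gevrey regularity is never lost. Concretely, since $u_0\in G_\sigma(\mathbb R)$, by Definition~\ref{class} there is some $\delta_0>0$ with $u_0\in G^{\delta_0}_{\sigma,s}$; by Proposition~\ref{embedding} we may assume $\delta_0\le 1$. The hypothesis $|\alpha|+|\Gamma|+\|u_0\|_{H^s}+\tfrac{|\beta|}3\|u_0\|_{H^s}^2+\tfrac{|\gamma|}4\|u_0\|_{H^s}^3\le\lambda\ep$ is exactly \eqref{ss0}, so Lemma~\ref{hsglobal} gives a global solution $u\in C([0,\infty);H^s)$ with $\|u(t)\|_{H^s}$ (hence $H(t)$) bounded by $H_0$ for all $t\ge0$. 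The goal is then to show $u(t)\in G^{\delta(t)}_{\sigma,s}$ for a suitable $\delta(t)>0$, all $t$.

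First I would set up the energy estimate. Put $y(t):=\|u(t)\|_{G^{\delta(t)}_{\sigma,s}}^2 = \|\Lambda^s e^{\delta(t)\Lambda^{1/\sigma}}u(t)\|_{L^2}^2$ for a nonincreasing $C^1$ function $\delta(t)$ with $\delta(0)=\delta_0$. Applying $\Lambda^s e^{\delta(t)\Lambda^{1/\sigma}}$ to \eqref{u}, pairing with $\Lambda^s e^{\delta(t)\Lambda^{1/\sigma}}u$ in $L^2$, and differentiating in time produces
\begin{align*}
\frac12\frac{d}{dt}y(t)=\dot\delta(t)\,\|\Lambda^{s+\frac{1}{2\sigma}}e^{\delta(t)\Lambda^{1/\sigma}}u\|_{L^2}^2-\lambda y(t)-\big\langle\Lambda^s e^{\delta\Lambda^{1/\sigma}}\!\big((u+\Gamma)u_x\big),\Lambda^s e^{\delta\Lambda^{1/\sigma}}u\big\rangle-\big\langle\Lambda^s e^{\delta\Lambda^{1/\sigma}}Q,\Lambda^s e^{\delta\Lambda^{1/\sigma}}u\big\rangle.
\end{align*}
The transport term is handled by Lemma~\ref{trans} with $v=u$: it is bounded by $C\|u\|_{H^s}^3+C\delta\big(\|u\|_{G^\delta_{\sigma,s}}\|\Lambda^{s+1/\sigma}e^{\delta\Lambda^{1/\sigma}}u\|^2+\dots\big)$, and the dangerous high-derivative pieces $\|\Lambda^{s+1/\sigma}e^{\delta\Lambda^{1/\sigma}}u\|^2$ come with a factor $\delta$; since $\Lambda^{s+1/\sigma}=\Lambda^{s+\frac1{2\sigma}}\Lambda^{\frac1{2\sigma}}$, a Young-type split lets me write $\delta\,\|\Lambda^{s+1/\sigma}e^{\delta\Lambda^{1/\sigma}}u\|^2\lesssim \delta\,\|u\|_{H^s}\cdot\|\Lambda^{s+\frac1{2\sigma}}e^{\delta\Lambda^{1/\sigma}}u\|^2$ up to lower-order terms (here I would use Lemma~\ref{inter} or a direct frequency split to trade one half-derivative against the $H^s$ norm, which is already globally controlled). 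The source term $Q=-\Lambda^{-2}\partial_x(-h(u)+u^2+\tfrac12 u_x^2)$ gains a derivative from $\Lambda^{-2}\partial_x$ (Proposition~\ref{d'd}), and by the algebra property Proposition~\ref{fg} its $G^\delta_{\sigma,s}$-norm is bounded by $C$ times a polynomial in $\|u\|_{G^\delta_{\sigma,s}}$ — but one of the factors can always be taken in $H^s$ (again Lemma~\ref{inter}), so the contribution to $\frac{d}{dt}y$ is $\le C(1+\|u\|_{H^s})^{N}\,y$ plus, possibly, a $\delta\cdot(\text{half-derivative})^2$ remainder. Collecting everything, and absorbing the half-derivative terms into the good term on the right by choosing $\dot\delta(t)$ negative enough, one gets a differential inequality of the form
\begin{align*}
\frac{d}{dt}y(t)\le\Big(\dot\delta(t)+C_1(1+H_0)^{N}\delta(t)\Big)\|\Lambda^{s+\frac1{2\sigma}}e^{\delta\Lambda^{1/\sigma}}u\|_{L^2}^2+C_2(1+H_0)^{N}\,y(t)+C_3(1+H_0)^{N},
\end{align*}
where I have used $\|u(t)\|_{H^s}\le H_0$ throughout.

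Next I would choose the radius. Pick $\delta(t)=\delta_0 e^{-\kappa t}$ with $\kappa:=C_1(1+H_0)^{N}$; then $\dot\delta+C_1(1+H_0)^{N}\delta=0$, the half-derivative term drops out entirely, and we are left with $\frac{d}{dt}y\le C_2(1+H_0)^N y+C_3(1+H_0)^N$. Gr\"onwall's inequality gives $y(t)\le\big(y(0)+C_3 t\big)e^{C_2(1+H_0)^N t}<\infty$ for every finite $t$, i.e. $\|u(t)\|_{G^{\delta_0 e^{-\kappa t}}_{\sigma,s}}<\infty$ for all $t\ge0$. Since $\delta_0 e^{-\kappa t}>0$ for every $t$, Definition~\ref{class} shows $u(t,\cdot)\in G_\sigma(\mathbb R)$ for all $t$; uniqueness is inherited from the $H^s$ theory (Lemma~\ref{hsglobal}) together with Theorem~\ref{local}. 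That completes the proof.

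The \textbf{main obstacle} is the high-frequency/loss-of-derivative bookkeeping: the nonlinearity — especially the transport term $(u+\Gamma)u_x$ — naturally costs $s+1$ (or $s+1/\sigma$ in the Gevrey weight) derivatives, one more than the energy controls, and one must show that every such over-differentiated term either carries a compensating factor of $\delta(t)$ (so it can be killed by the $\dot\delta$ term, this is the role of Lemma~\ref{trans} and Lemma~\ref{polo}) or can be split so that the excess half-derivative sits on a factor measured in $H^s$, where Lemma~\ref{hsglobal} already provides a uniform-in-time bound. Making this split cleanly — so that the coefficient of $\|\Lambda^{s+1/(2\sigma)}e^{\delta\Lambda^{1/\sigma}}u\|^2$ is genuinely of the form $\dot\delta+C\delta$ with $C$ depending only on $H_0$, not on the Gevrey norm itself — is the delicate point; once it is done, the exponentially decaying radius and Gr\"onwall finish everything without further difficulty.
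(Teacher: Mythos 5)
Your overall skeleton (time-dependent radius $\delta(t)$, energy estimate in $G^{\delta(t)}_{\sigma,s}$, Lemmas \ref{trans} and \ref{inter}, and the global $H^s$ bound of Lemma \ref{hsglobal}) coincides with the paper's, but the step you yourself flag as delicate is exactly where your argument breaks. Lemma \ref{trans} (with $v=u$) produces the dangerous contribution in the form $C\delta(t)\,\|u\|_{G^{\delta}_{\sigma,s}}\,\|u\|^2_{G^{\delta}_{\sigma,s+\frac{1}{2\sigma}}}$, i.e.\ with a coefficient containing the \emph{Gevrey} norm of $u$, and the same happens for the source $Q$ after Lemma \ref{inter} (see \eqref{g4}: the $\delta$-part always carries powers of $\|u\|_{G^{\delta}_{\sigma,s}}$, not of $\|u\|_{H^s}$). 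Your proposed repair, the ``Young-type split'' $\delta\|\Lambda^{s+1/\sigma}e^{\delta\Lambda^{1/\sigma}}u\|^2\lesssim\delta\|u\|_{H^s}\|\Lambda^{s+\frac{1}{2\sigma}}e^{\delta\Lambda^{1/\sigma}}u\|^2$, is not even homogeneous (quadratic versus cubic in $u$), and the inequality you would actually need, $\|\Lambda^{s}e^{\delta\Lambda^{1/\sigma}}u\|\,\|\Lambda^{s+1/\sigma}e^{\delta\Lambda^{1/\sigma}}u\|^{2}\lesssim\|u\|_{H^s}\,\|\Lambda^{s+\frac{1}{2\sigma}}e^{\delta\Lambda^{1/\sigma}}u\|^{2}$, is false: by interpolation $\|\Lambda^{s+\frac{1}{2\sigma}}e^{\delta\Lambda^{1/\sigma}}u\|^{2}\le\|\Lambda^{s}e^{\delta\Lambda^{1/\sigma}}u\|\,\|\Lambda^{s+\frac{1}{\sigma}}e^{\delta\Lambda^{1/\sigma}}u\|$, so it would force $\|\Lambda^{s+\frac{1}{\sigma}}e^{\delta\Lambda^{1/\sigma}}u\|\le\|u\|_{H^s}$. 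Nor can the exponential weight be pushed entirely onto the high-frequency factors: in the convolution one only has $e^{\delta(1+\xi^2)^{1/(2\sigma)}}\le e^{\delta(1+|\xi-\eta|^2)^{1/(2\sigma)}}e^{\delta(1+\eta^2)^{1/(2\sigma)}}$, so the ``coefficient'' factor genuinely must be measured in a Gevrey norm, and Lemma \ref{inter} only trades a Gevrey norm for $H^s$ \emph{plus} a higher-order Gevrey term, which makes the absorption harder, not easier. Consequently your choice $\delta(t)=\delta_0e^{-\kappa t}$ with $\kappa=C_1(1+H_0)^N$ does not annihilate the bracket, and the Gr\"onwall step does not close: this is a genuine gap at the central absorption step.

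The paper closes precisely this loop with a bootstrap instead of your linear ODE for $\delta$: it keeps the Gevrey-norm coefficient, posits $(1+\|u\|_{G^{\delta(t)}_{\sigma,s}})^2\le 4f^2(t)$ with $f^2(t)=2(1+\|u_0\|_{G^{\delta_0}_{\sigma,s}})^2+2C\int_0^t(1+\|u(t')\|_{H^s})^5dt'$ (finite for all time by Lemma \ref{hsglobal}), chooses $\dot\delta(t)=-8C\delta(t)f^3(t)$ as in \eqref{G2} so that $\dot\delta+C\delta(1+\|u\|_{G^{\delta}_{\sigma,s}})^3\le 0$ under the bootstrap assumption, integrates to obtain the strictly stronger bound \eqref{g7}, and then continues in time; this gives $\delta(t)=\delta_0\exp\bigl(-8C\int_0^tf^3(t')dt'\bigr)>0$ for all $t$, a radius that decays faster than exponentially but never vanishes. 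To repair your proof you must either run this continuity/bootstrap argument (coupling the choice of $\dot\delta$ to an a priori bound on the Gevrey norm itself) or establish a genuinely sharper commutator estimate than Lemma \ref{trans} whose $\delta$-term has coefficients depending only on $\|u\|_{H^s}$; as written, neither is done.
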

\begin{proof}
	We aim to get the global priori estimate of $u$  in the time-dependent space $G^{\delta(t)}_{\sigma,s}.$ One can apply the Fourier Galerkin approximating method to construct local solutions in $G^{\delta(t)}_{\sigma,s},$  globalize the
	result by the later estimate \eqref{g7}.
	
	According to the definition of the Gevery class, we see that $G^{\delta}_{\sigma,s}\hookrightarrow G^{\delta-\epsilon}_{\sigma,\infty}$ for any $\epsilon>0$. Notice that $u_0\in G_{\sigma}(\mathbb{R}).$ In general, we can suppose that $u_0\in G^1_{\sigma,s}.$ In the following, we will claim that there exists a $\delta(t)$ such that the solutions remain belong to  the Gevery class $G_{\sigma}.$ For any $t\in[0,T],$ it follows from the system \eqref{u} that
 \begin{align}\label{g}
 	\frac{d}{dt}\|u\|^2_{G^{\delta(t)}_{\sigma,s}}&=\frac{d}{dt}\int (1+|\xi|^2)^se^{2\delta(t)|\xi|^{\frac{1}{\sigma}}}\hat{u}(\xi)\bar{\hat{u}}(\xi)d\xi\notag\\
 	&=2\dot{\delta}(t)\int (1+|\xi|^2)^{s}|\xi|^{\frac{1}{\sigma}}e^{2\delta(t)|\xi|^{\frac{1}{\sigma}}}\hat{u}(\xi)\bar{\hat{u}}(\xi)d\xi\notag\\&~~~+\textbf{Re}\int (1+|\xi|^2)^se^{2\delta(t)|\xi|^{\frac{1}{\sigma}}}\Big(-\widehat{uu_x}-\Gamma\hat{u}-\lambda\hat{u}-\widehat{P_x}\Big)(\xi)\bar{\hat{u}}(\xi)d\xi,
 \end{align}
with \textbf{Re} stands the real part of a complex number.

Taking advantage of Lemma \ref{trans}, we get
\begin{align}\label{g1}
|\int (1+|\xi|^2)^se^{2\delta(t)|\xi|^{\frac{1}{\sigma}}}\widehat{uu_x}(\xi)\bar{\hat{u}}(\xi)d\xi|&=|\langle\Lambda ^se^{\delta(t)\Lambda^{\frac{1}{\sigma}}}(uu_x),\Lambda ^se^{\delta(t)\Lambda^{\frac{1}{\sigma}}}u\rangle|\notag\\
&\leq C\Big(\|u\|^3_{H^s}+\delta(t)\|u\|_{G^{\delta(t)}_{\sigma,s}}\|u\|^2_{G^{\delta(t)}_{\sigma,s+\frac{1}{2\sigma}}}\Big).
\end{align}
Likewise, one can get the following estimates
\begin{align}\label{g2}
	&|\int (1+|\xi|^2)^se^{2\delta(t)(1+|\xi|^2)^{\frac{1}{2\sigma}}}\widehat{-P_x}(\xi)\bar{\hat{u}}(\xi)d\xi|\notag\\&=|\langle\Lambda ^se^{\delta(t)\Lambda^{\frac{1}{\sigma}}}\Lambda^{-2}\Big(\partial_x(-h(u)-u^2-\frac 1 2 u_x^2)\Big),\Lambda ^se^{\delta(t)\Lambda^{\frac{1}{\sigma}}}u\rangle|\notag\\&\leq\|\Lambda ^{s-1}e^{\delta(t)\Lambda^{\frac{1}{\sigma}}}\Big(\partial_x(-h(u)-u^2-\frac 1 2 u_x^2)\Big)\|\|\Lambda ^se^{\delta(t)\Lambda^{\frac{1}{\sigma}}}u\|\notag\\
&\leq C\Big(\|u\|^2_{G^{\delta(t)}_{\sigma,s}}+\|u\|^3_{G^{\delta(t)}_{\sigma,s}}+\|u\|^4_{G^{\delta(t)}_{\sigma,s}}+\|u\|^5_{G^{\delta(t)}_{\sigma,s}}\Big).
\end{align}
Combining Lemma \ref{inter} with $l=1,~l=\frac 2 3,~l=\frac 1 2$ and $l=\frac 2 5,$ we obtain
\begin{align}\label{g4}
&\|u\|^2_{G^{\delta(t)}_{\sigma,s}}\leq C\Big(\|u\|^2_{H^s}+\delta(t)\|u\|^2_{G^{\delta(t)}_{\sigma,s+\frac{1}{2\sigma}}}\Big),\notag\\
&\|u\|^3_{G^{\delta(t)}_{\sigma,s}}\leq C\Big(\|u\|^3_{H^s}+\delta(t)\|u\|^3_{G^{\delta(t)}_{\sigma,s+\frac{1}{3\sigma}}}\Big)\leq C\Big(\|u\|^3_{H^s}+\delta(t)\|u\|_{G^{\delta(t)}_{\sigma,s}}\|u\|^2_{G^{\delta(t)}_{\sigma,s+\frac{1}{2\sigma}}}\Big),\notag\\
&\|u\|^4_{G^{\delta(t)}_{\sigma,s}}\leq C\Big(\|u\|^4_{H^s}+\delta(t)\|u\|^4_{G^{\delta(t)}_{\sigma,s+\frac{1}{4\sigma}}}\Big)\leq C\Big(\|u\|^4_{H^s}+\delta(t)\|u\|^2_{G^{\delta(t)}_{\sigma,s}}\|u\|^2_{G^{\delta(t)}_{\sigma,s+\frac{1}{2\sigma}}}\Big),\notag\\
&\|u\|^5_{G^{\delta(t)}_{\sigma,s}}\leq C\Big(\|u\|^5_{H^s}+\delta(t)\|u\|^5_{G^{\delta(t)}_{\sigma,s+\frac{1}{5\sigma}}}\Big)\leq C\Big(\|u\|^5_{H^s}+\delta(t)\|u\|^3_{G^{\delta(t)}_{\sigma,s}}\|u\|^2_{G^{\delta(t)}_{\sigma,s+\frac{1}{2\sigma}}}\Big).
\end{align}
By the same token, it yields
\begin{align}\label{g5}
&|\int (1+|\xi|^2)^se^{2\delta(t)(1+|\xi|^2)^{\frac{1}{2\sigma}}}(-\Gamma\hat{u}-\lambda\hat{u})(\xi)\bar{\hat{u}}(\xi)d\xi| \leq \Big(\|u\|^2_{H^s}+C\delta(t)\|u\|^2_{G^{\delta(t)}_{\sigma,s+\frac{1}{2\sigma}}}\Big).
\end{align}
Plugging \eqref{g1}-\eqref{g5} into \eqref{g}, we obtain
\begin{align*}
	\frac 1 2 \frac{d}{dt}\|u\|^2_{G^{\delta(t)}_{\sigma,s}}&\leq \Big(\dot{\delta}(t)+C\delta(t)(1+\|u\|_{G^{\delta(t)}_{\sigma,s}})^3\Big)\|u\|^2_{G^{\delta(t)}_{\sigma,s+\frac{1}{2\sigma}}}+C(\underbrace{1+\|u\|_{H^s}}_{b(t)})^5,
\end{align*}
which implies
\begin{align*}
	\frac 1 2 \frac{d}{dt}(1+\|u\|_{G^{\delta(t)}_{\sigma,s}})^2&\leq \Big(\dot{\delta}(t)+C\delta(t)(1+\|u\|_{G^{\delta(t)}_{\sigma,s}})^3\Big)(1+\|u\|_{G^{\delta(t)}_{\sigma,s+\frac{1}{2\sigma}}})^2+Cb^5(t).
\end{align*}
For any $t\in [0,T_0],$ we assume that
\begin{align}\label{f1}
	(1+\|u\|_{G^{\delta(t)}_{\sigma,s}})^2\leq 4f^2(t),
\end{align}
where $f^2(t):=2(1+\|u_0\|_{G^{\delta_0}_{\sigma,s}})^2+2C\int_0^tb^5(t')dt'$
and take
\begin{align}\label{G2}
	\dot{\delta}(t)=-8C\delta(t)f^3(t).
\end{align}
Noting that $B^s_{2,2}=H^s,$
 Lemma \ref{hsglobal} guarantees that the existence of global classical solution $u\in \mathcal{C}(\mathbb{R}^+;H^s).$ Hence, it follows  from  \eqref{G2} that
\begin{align}\label{g9}
	\delta(t)=\delta_0{\rm \exp}\Big(-8C\int_0^t f^3(t')dt'\Big)>0,~\forall~t\in [0,\infty),
\end{align}
where $0<\delta_0<1$.\\
 Combining \eqref{f1} and \eqref{G2}, we get for any $[0,T_0]$
\begin{align}\label{g7}
	(1+\|u(t)\|_{G^{\delta(t)}_{\sigma,s}})^2\leq f^2(t).
\end{align}
Applying Theorem \ref{local}, we have the unique  global solution on time interval $[0,T_0].$ Moreover, we get
\begin{align}\label{G3}
	(1+\|u(T_0)\|_{G^{\delta(T_0)}_{\sigma,s}})^2\leq f^2(T_0).
\end{align}
Using \eqref{f1}-\eqref{G2} and \eqref{G3}  again, then there exists a $T_1>T_0$ such that $(1+\|u\|_{G^{\delta(t)}_{\sigma,s}})^2\leq 2f^2(t)$ on $[T_0,T_1].$ Therefore, we have $\sup_{t\in [T_0,T_1]} (1+\|u\|_{G^{\delta(t)}_{\sigma,s}})^2\leq f^2(t).$ Moreover, we conclude that
$$\sup_{t\in [0,T_1]}	(1+\|u(t)\|_{G^{\delta(t)}_{\sigma,s}})^2\leq f^2(t).$$
  Repeating the bootstrap argument, we see that the global existence of strong solution of  the system \eqref{u}.
\end{proof}
		\noindent\textbf{Acknowledgements.}
	This work was partially supported by NNSFC (Grant No. 12171493), FDCT (Grant No. 0091/2018/A3), the Guangdong Special Support Program (Grant No.8-2015).
		
	\addcontentsline{toc}{section}{\refname}

\end{document}